\tikzset{%
    symbol/.style={%
        draw=none,
        every to/.append style={%
            edge node={node [sloped, allow upside down, auto=false]{$#1$}}}
    }
}
\definecolor{vioteal}{RGB}{90,140,220}
\tikzstyle{every picture}=[semithick, scale = 0.7, baseline = (current bounding box.center)]
\tikzset{t/.style= {draw=white, double = teal, ultra thick}} 
\tikzset{vt/.style= {draw=white, double = vioteal, ultra thick}} 
\tikzset{s/.style= {draw=white, double = red, ultra thick}} 
\tikzset{f/.style= {draw=white, double = black, ultra thick}} 
\tikzset{s0/.style= {red, semithick}} 
\tikzset{t0/.style= {teal, semithick}} 
\tikzset{f0/.style= {black, semithick}} 
\tikzset{ts/.style = {violet, ultra thick} } 
\tikzset{s-scope/.style={every path/.style=s}}	
\tikzset{shorten <>/.style={shorten >=#1,shorten <=#1}}
\newcommand{\catname}[1]{{\normalfont\textbf{#1}}}
\newcommand{\compose}{ }
\newcommand{\postp}[1]{#1{\RHD}}
\newcommand{\prep}[1]{{\LHD}{#1}}
\newcommand{\Cat}{\catname{Cat}}
\newcommand{\Pushmap}[1]{\lambda_{#1}}
\newcommand{\push}[2]{\exists_{#1}{[\sectionp{#2}]}} 
\newcommand{\sectionp}[1]{\smallstar_{#1}}
\newcommand{\smallstar}{\smallwhitestar}
\newcommand{\true}{\mathrm{true}}
\newcommand{\false}{\mathrm{false}}
\newcommand{\Pred}{\mathbf{Pred}}
\newcommand{\Rel}{\mathbf{Rel}}
\newcommand{\Set}{\mathbf{Set}}
\newcommand{\injection}[2]{\iota_{#1,#2}}
\newcommand{\projection}[2]{\pi_{#1,#2}}
\newcommand{\comprehension}[2]{[#1,#2]}
\newcommand{\comprehensionzero}{[-]}
\newcommand{\injectionzero}{\iota}
\newcommand{\projectionzero}{\pi}
\newcommand{\quotient}[1]{[\hspace{-.22em}[#1]\hspace{-.22em}]}
\newcommand{\Bcategory}{\mathscr{B}}
\newcommand{\Ccategory}{\mathscr{C}}
\newcommand{\Ecategory}{\mathscr{E}}
\newcommand{\Ktwocategory}{\mathscr{K}}
\newcommand{\Barrowcategory}{\mathscr{B}^{\to}}
\newcommand{\jacobscomprehension}{\mathscr{P}}
\newcommand{\fumexquotient}{\mathscr{Q}}
\newcommand{\codfunctor}{\mathbf{cod}}
\newcommand{\domfunctor}{\mathbf{dom}}
\newcommand{\id}[1]{\mathrm{id}_{#1}}
\newcommand{\idzero}[1]{\mathrm{id}}
\newcommand{\imagefunctor}{\mathbf{image}}
\newcommand{\Alg}[1]{\mathbf{Alg}_{#1}}
\newcommand{\CoAlg}[1]{\mathbf{CoAlg}_{#1}}
\newcommand{\ArrowLax}[1]{\mathbf{#1//#1}}
\newcommand{\ArrowStrict}[1]{\mathbf{#1/#1}}
\newcommand{\Endo}[1]{\catname{Endo}(#1)}
\newcommand{\Bobject}{\mathrm{\mathbf{B}}}
\newcommand{\Eobject}{\mathrm{\mathbf{E}}}
\newcommand{\Xobject}{\mathrm{\mathbf{X}}}
\newcommand{\Bobjectarrow}{\mathrm{\mathbf{B}}^{\to}}
\newcommand{\Ffunctor}{F}
\newcommand{\Gfunctor}{G}
\renewcommand{\compose}{\circ}
\renewcommand{\postp}[1]{{#1}\rhd}
\renewcommand{\prep}[1]{\lhd{#1}}
\newcommand{\source}{\mathbf{source}}
\newcommand{\target}{\mathbf{target}}
\newcommand{\arrowdom}{\mathbf{dom}}
\newcommand{\arrowcod}{\mathbf{cod}}
\newcommand{\arrowid}{\mathbf{id}}
\newcommand{\arrowcell}{\mathbf{hom}}
\newcommand{\Triple}[3]{({#1}, {#3}, {#2})}
\title{Comprehension and quotient structures\\
in the language of 2-categories} 
\titlerunning{Comprehension and quotient structures in the language of 2-categories} 
\author{Paul-Andr{\'e} Melli{\`e}s}{CNRS, Institut de Recherche en Informatique Fondamentale (IRIF), Universit\'e de Paris, France \and \url{http://www.irif.fr/~mellies} }{mellies@irif.fr}{https://orcid.org/0000-0001-6180-2275}{ERC Advanced Project DuaLL, Grant agreement ID: 670624.}
\author{Nicolas Rolland}{Institut de Recherche en Informatique Fondamentale (IRIF), Universit\'e de Paris, France}{nicolas.rolland@gmail.com}{https://orcid.org/0000-0002-0539-3718}{}
\authorrunning{Paul-Andr{\'e} Melli{\`e}s and Nicolas Rolland} 
\keywords{Comprehension structures, quotient structures, comprehension structures with section, comprehension structures with image, 2-categories, formal adjunctions, path objects, categorical logic, inductive reasoning on algebras, coinductive reasoning on coalgebras.} 
\begin{document}

\maketitle
\begin{abstract}
Lawvere observed in his celebrated work on hyperdoctrines that the set-theoretic
schema of comprehension can be elegantly expressed in the functorial language of categorical logic,
as a comprehension structure on the functor $p:\mathscr{E}\to\mathscr{B}$
defining the hyperdoctrine.
In this paper, we formulate and study a strictly ordered hierarchy of three notions
of comprehension structure on a given functor $p:\mathscr{E}\to\mathscr{B}$,
which we call (i)~comprehension structure, (ii)~comprehension structure with section,
and (iii)~comprehension structure with image.
Our approach is 2-categorical and we thus formulate
the three levels of comprehension structure 
on a general morphism $p:\mathrm{\mathbf{E}}\to\mathrm{\mathbf{B}}$ in a 2-category~$\mathscr{K}$.
This conceptual point of view on comprehension structures enables us
to revisit the work by Fumex, Ghani and Johann on the duality
between comprehension structures and quotient structures on a given functor $p:\mathscr{E}\to\mathscr{B}$.
In particular, we show how to lift the comprehension and quotient structures 
on a functor $p:\mathscr{E}\to\mathscr{B}$ to the categories of algebras or coalgebras
associated to functors $F_{\mathscr{E}}:\mathscr{E}\to\mathscr{E}$
and $F_{\mathscr{B}}:\mathscr{B}\to\mathscr{B}$ of interest, 
in order to interpret reasoning by induction and coinduction in the traditional language of categorical logic, 
formulated in an appropriate 2-categorical way.
\end{abstract}

\section{Introduction}

\paragraph*{A fundamental duality between comprehension and quotient structures}
One fundamental discovery by Lawvere \cite{lawvere-hyperdoctrines} is that the \emph{comprehension schema} 
of Zermelo set theory \cite{zermelo} can be elegantly expressed in the functorial language of categorical logic, in the following way.
Consider
the category $\Set$ of sets and functions, and
the category $\Pred$ of predicates,
defined in the following way: its objects are the pairs $(A,R)$ consisting of a set~$A$
and of a function $R:A\to\Omega$ to the set $\Omega=\{\false,\true\}$ of booleans, 
describing a specific predicate~$R$ of~$A$ ; 
its morphisms $f:(A,R)\to(B,S)$ are the functions $f:A\to B$
such that $\forall a\in A, Ra\Rightarrow S(fa)$.
%
The functor $p:\Pred\to\Set$ is the forgetful functor which transports every predicate $(A,R)$ to its underlying set~$A$.
The comprehension schema enables one to turn every predicate $(A,R)$ 
into a set $\comprehension{A}{R}$ defined as follows
$$
\comprehension{A}{R} \,\, := \,\, \{ \,\, a\in A \,\, | \,\, Ra=\true \,\, \}
$$
equipped moreover with a function 
$$
\begin{tikzcd}[column sep =1em]
{\injection{A}{R}} \quad : \quad {\comprehension{A}{R}} \arrow[rr] && A
\end{tikzcd}
$$
which transports every element~$a$ of the set $\comprehension{A}{R}$ to itself, seen as element in $A$.
The construction is natural in $(A,R)$ in the sense that it defines a functor
$$
\begin{tikzcd}[column sep =1em]
{\comprehensionzero} \quad : \quad \Pred \arrow[rr] && \Set
\end{tikzcd}
$$
together with a natural transformation 
$$
\begin{tikzcd}[column sep =1em]
\iota \,\, : \,\, {\comprehensionzero} \arrow[rr,-implies,double] && p \,\, : \,\, \Pred \arrow[rr] && \Set.
\end{tikzcd}
$$
Here, naturality means that every morphism $f:(A,R)\to (B,S)$
between predicates induces the commutative diagram below, in the category~$\Set$.
\begin{equation}\label{equation/square-diagram-for-comprehension}
\begin{tikzcd}[column sep =3em,row sep =1em]
{\comprehension{A}{R}} \arrow[rr,"{{\injection{A}{R}}}"]
\arrow[dd,"{{\comprehension{A}{f}}}"{swap}]
&& A\arrow[dd,"{f}"]
\\
\\
{\comprehension{B}{S}} \arrow[rr,"{{\injection{B}{S}}}"] && B
\end{tikzcd}
\end{equation}
More generally,
considering this example
of the functor $p:\Pred\to\Set$ as typical, it makes sense to formulate the following ``minimalist'' notion of comprehension structure:
\begin{definition}\label{definition/comprehension-structure}
A comprehension structure on a functor $p:\Ecategory\to\Bcategory$
is a pair $(\comprehensionzero,\injectionzero)$ consisting of a functor $\comprehensionzero:\Ecategory\to\Bcategory$
and of a natural transformation $\injectionzero:\comprehensionzero\Rightarrow p$.
\end{definition}
Interestingly, Jacobs provides in \cite{jacobs-paper} a useful and detailed survey
of a hierarchy of axiomatic requirements on a functor $p:\Ecategory\to\Bcategory$
appearing in the literature, from which such a comprehension structure can be derived.
In a decreasing order of generality, one finds:
\begin{itemize}
\item Jacob's comprehension categories, defined in \cite{jacobs-paper}, Def. 4.1, page 181.
\item Ehrhard's $D$-categories \cite{ehrhard} called comprehension categories with unit in \cite{jacobs-paper}, Def. 4.12.
\item Lawvere categories \cite{lawvere-hyperdoctrines} as Jacobs defined them in \cite{jacobs-paper}, first paragraph of p. 190.
\end{itemize}
Our definition just given of a comprehension structure (Def.~\ref{definition/comprehension-structure})
does not appear as such in the literature, at least in the elementary 2-categorical way we express it here.
The reason is that the comprehension pair $(\comprehensionzero,\injectionzero)$
can be equivalently formulated as a functor $\jacobscomprehension:\Ecategory\to\Barrowcategory$
to the category of arrows of $\Bcategory$, making the diagram below commute:
\begin{equation}\label{equation/Barrowcategory-cod-functor}
\begin{tikzcd}[column sep = .8em, row sep = .8em]
\Ecategory\arrow[rrrr,"{\jacobscomprehension}"]\arrow[rrdd,"p"{swap}] &&&& \Barrowcategory\arrow[lldd,"{\codfunctor}"]
\\
\\
&& \Bcategory
\end{tikzcd}
\end{equation}
where $\codfunctor:\Barrowcategory\to\Bcategory$ denotes the codomain functor.
The definitions of comprehension category \cite{jacobs-paper}  and of Lawvere category \cite{lawvere-hyperdoctrines} are based on this formulation, while the definition of $D$-categories  \cite{ehrhard} works in an entirely different way,
which we analyze later in this introduction, as well as in \S\ref{section/comprehension-with-section}.
One purpose of the present paper is to revisit these three levels definitions
from a purely 2-categorical point of view.
This search for a clean 2-categorical account of comprehension
in categorical logic is motivated by our desire to understand 
at this level of abstraction a recent observation 
by Fumex, Ghani and Johann~\cite{fumex-phdthesis,ghani-johann-fumex},
who establish a very nice duality between (a) the \emph{operation of comprehension}
which underlies reasoning by induction using initial algebras,
and (b) the \emph{operation of quotienting} 
which underlies reasoning by coinduction using terminal coalgebras.
In particular, Fumex introduces in his PhD thesis a notion of $tC$-opfibration $p:\Ecategory\to\Bcategory$ (where $tC$ refers to the section functor $t$ 
and the comprehension functor $C$ of the structure)
adapted for induction reasoning, which he then dualizes into a notion of $QCE$-category $p^{op}:\Ecategory^{op}\to\Bcategory^{op}$ (where $QCE$ stands for quotient category with equality)
adapted for coinduction reasoning, and simply obtained by reversing the orientation of every morphism in $\Ecategory$ and $\Bcategory$.
\medbreak

In order to understand and to illustrate this idea of quotient structures,
consider the category $\Rel$ whose objects are pairs $(A,R)$ consisting of a set $A$ and of a binary relation $R\subseteq A\times A$,
and whose morphisms $f:(A,R)\to (B,S)$ are functions $f:A\to B$ such that $R(a,a')\Rightarrow S(fa,fa')$.
As in the previous case, the functor $p:\Rel\to\Set$ is the forgetful function
which transports every binary relation $(A,R)$ to its underlying set~$A$.
Every binary relation $(A,R)$ induces a set $A/R$ defined as the quotient
$$
A/R := A/{{\sim_R}}
$$
of the underlying set $A$ by the equivalence relation $\sim_R$ generated by the binary relation~$R$.
The set $A/R$  comes together with a function
$$
\begin{tikzcd}[column sep =1em]
{\projection{A}{R}} \quad : \quad A \arrow[rr] && {A/R}
\end{tikzcd}
$$
which transports every element of $A$ to its equivalence class modulo $\sim_{R}$.
The construction is natural in $(A,R)$ in the sense that it defines a functor
$$
\begin{tikzcd}[column sep =1em]
{\quotient{-}} \quad : \quad \Rel \arrow[rr] && \Set
\end{tikzcd}
$$
with $\quotient{A,R}=A/R$, together with a natural transformation 
$$
\begin{tikzcd}[column sep =1em]
\projectionzero \,\, : \,\, p \arrow[rr,-implies,double] && {\quotient{-}} \,\, : \,\, \Rel \arrow[rr] && \Set.
\end{tikzcd}
$$
Here, naturality means that every predicate morphism $f:(A,R)\to (B,S)$
induces the commutative diagram below in the category~$\Set$.
\begin{equation}\label{equation/square-diagram-for-quotient}
\begin{tikzcd}[column sep =3em,row sep =1em]
A\arrow[rr,"{{\projection{A}{R}}}"]
\arrow[dd,"{f}"{swap}]
&& 
{\quotient{A,R}} 
\arrow[dd,"{\quotient{f}}"]
\\
\\
B \arrow[rr,"{{\projection{B}{S}}}"] && {\quotient{B,S}} 
\end{tikzcd}
\end{equation}
In the same way as previously,
this example leads us to the following definition,
obtained by dualizing Def.~\ref{definition/comprehension-structure}.
\begin{definition}\label{definition/basic-quotient}
A quotient structure on a functor $p:\Ecategory\to\Bcategory$
is a pair $(\quotient{-},\projectionzero)$ consisting of a functor $\quotient{-}:\Ecategory\to\Bcategory$
and of a natural transformation $\projectionzero:p\Rightarrow\quotient{-}$.
\end{definition}
In the same way as previously, and by duality,
a quotient structure is the same thing as a functor $\fumexquotient:\Ecategory\to\Barrowcategory$
to the category of arrows of~$\Bcategory$, making the diagram below commute:
\begin{equation}\label{equation/Barrowcategory-dom-functor}
\begin{tikzcd}[column sep = .8em, row sep = .8em]
\Ecategory\arrow[rrrr,"{\fumexquotient}"]\arrow[rrdd,"p"{swap}] &&&& \Barrowcategory\arrow[lldd,"{\domfunctor}"]
\\
\\
&& \Bcategory
\end{tikzcd}
\end{equation}
where $\domfunctor:\Barrowcategory\to\Bcategory$ denotes the domain functor.

\paragraph*{A 2-categorical classification of comprehension structures} 
\noindent
\textbf{(i) Comprehension structures.}
In order to understand the duality between comprehension and quotient structures,
we find enlightening to take seriously the 2-categorical nature of
Def.~\ref{definition/comprehension-structure}
and~\ref{definition/basic-quotient}, and to reformulate them in the following way.
Suppose given a 2-category~$\Ktwocategory$ such as $\Ktwocategory=\Cat$, the 2-category of categories.
We consider the 2-category $\ArrowLax{\Ktwocategory}$
whose objects are the triples $(\Eobject,\Bobject,p:\Eobject\to \Bobject)$
consisting of a pair of $0$-cells $\Eobject$ and $\Bobject$ and a $1$-cell $p:\Eobject\to\Bobject$
of the 2-category $\Ktwocategory$, and whose morphisms
\begin{equation}\label{equation/lax-morphisms}
\begin{tikzcd}[column sep=1em]
(f_\Eobject,f_\Bobject, \varphi) \,\,\, : \,\,\, (\Eobject_1,\Bobject_1,p_1:\Eobject_1\to \Bobject_1) \arrow[rr] && (\Eobject_2,\Bobject_2,p_2:\Eobject_2\to \Bobject_2)
\end{tikzcd}
\end{equation}
are triples consisting of a pair of $1$-cells $f_\Bobject:\Bobject_1\to \Bobject_2$ and $f_\Eobject:\Eobject_1\to \Eobject_2$
and a 2-cell
$$
\begin{tikzcd}[column sep=3em,row sep=.4em]
{\Eobject_1}  \ar[dd, "{p_1}"{swap},"{}"{name=target}] \ar[rr,"{f_\Eobject}"] 
&& 
{\Eobject_2} \ar[dd,"{p_2}","{}"{swap,name=source}] 
\\
{} & {} & {} 
\\
{\Bobject_1}   \ar[rr,"{f_\Bobject}"{swap}]  & {} & {\Bobject_2}
\arrow[from=source,to=target,double,-implies,shorten <>=10pt,"{\varphi}"{description,pos=.5}]
\end{tikzcd}
\quad
\quad
\begin{tikzcd}[column sep=1em,row sep=.7em]
\varphi : 
{p_2\circ f_\Eobject}  \ar[rr, double, -implies] &&
{f_\Bobject\circ p_1}
\end{tikzcd}
$$
A morphism~(\ref{equation/lax-morphisms}) is called \emph{strict} when the 2-cell $\varphi$ is the identity.
We write in that case $(f_\Eobject,f_\Bobject)$ instead of $(f_\Eobject,f_\Bobject, \id{})$.
We also write $\ArrowStrict{\Ktwocategory}$ for the sub-2-category 
of $\ArrowLax{\Ktwocategory}$ of strict morphisms,
with the same notion of 2-cell.
It is essentially immediate that
\begin{proposition}\label{proposition/comprehension-structure}
A comprehension structure $(\comprehensionzero,\injectionzero)$
(in the sense of Def.~\ref{definition/comprehension-structure})
is the same thing as a morphism in $\Cat//\Cat$ of the form
\begin{equation}\label{equation/comprehension-structure}
\begin{tikzcd}[column sep=1em]
(f_{\Ecategory}, \id{\Bcategory}, \varphi) \quad : \quad (\Ecategory,\Bcategory,p:\Ecategory\to \Bcategory) \arrow[rr] 
&& (\Bcategory,\Bcategory,\id{\Bcategory}:\Bcategory\to \Bcategory)
\end{tikzcd}
\end{equation}
\end{proposition}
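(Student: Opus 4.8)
The plan is to unwind the definition of a morphism in $\Cat//\Cat$ for the particular shape displayed in~(\ref{equation/comprehension-structure}), and to observe that the resulting data coincides on the nose with a comprehension pair $(\comprehensionzero,\injectionzero)$ in the sense of Def.~\ref{definition/comprehension-structure}. Concretely, in the morphism $(f_{\Ecategory},\id{\Bcategory},\varphi)$ the source object is $(\Ecategory,\Bcategory,p)$ and the target object is $(\Bcategory,\Bcategory,\id{\Bcategory})$, so the instantiation of the general scheme~(\ref{equation/lax-morphisms}) forces $\Eobject_1=\Ecategory$, $\Bobject_1=\Bcategory$, $p_1=p$ on the one side, and $\Eobject_2=\Bobject_2=\Bcategory$, $p_2=\id{\Bcategory}$ on the other.

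First I would determine the type of the 2-cell $\varphi$. By definition of a morphism in $\Cat//\Cat$, this 2-cell has the form $\varphi:p_2\compose f_{\Ecategory}\Rightarrow f_{\Bobject}\compose p_1$. Substituting $p_2=\id{\Bcategory}$ and $f_{\Bobject}=\id{\Bcategory}$, and invoking the identity laws $\id{\Bcategory}\compose f_{\Ecategory}=f_{\Ecategory}$ and $\id{\Bcategory}\compose p=p$ of the 2-category $\Cat$, this type collapses to $\varphi:f_{\Ecategory}\Rightarrow p$. Hence the datum $\varphi$ is exactly a natural transformation from the functor $f_{\Ecategory}:\Ecategory\to\Bcategory$ to $p$, which is precisely the shape required of $\injectionzero$.

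From here the bijection is immediate in both directions. Given a comprehension structure $(\comprehensionzero,\injectionzero)$, I set $f_{\Ecategory}:=\comprehensionzero$ and $\varphi:=\injectionzero$, which produces a morphism of the form~(\ref{equation/comprehension-structure}); conversely, from such a morphism I recover the pair by reading off $\comprehensionzero:=f_{\Ecategory}$ and $\injectionzero:=\varphi$. These two assignments are mutually inverse by construction, since once $f_{\Bobject}$ is pinned to $\id{\Bcategory}$ the morphism carries no information beyond the functor $f_{\Ecategory}$ and the 2-cell $\varphi$.

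The only point requiring care---and the reason the statement is not entirely vacuous---is the matching of the 2-cell orientation. One must check that the lax direction $p_2\compose f_{\Ecategory}\Rightarrow f_{\Bobject}\compose p_1$ fixed for the morphisms of $\Cat//\Cat$ yields, after the substitution above, a natural transformation pointing from $\comprehensionzero$ towards $p$, and not the reverse. This is exactly the orientation convention adopted in Def.~\ref{definition/comprehension-structure}, and it is what makes the comprehension case emerge from this instantiation rather than the quotient case, which by contrast arises from the opposite orientation as in Def.~\ref{definition/basic-quotient}. Beyond this bookkeeping no nontrivial computation is involved, which is why the result is essentially immediate.
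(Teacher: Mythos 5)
Your proof is correct and follows exactly the route the paper intends: the paper offers no explicit argument beyond declaring the statement ``essentially immediate,'' and your unwinding of the 2-cell type $\varphi: p_2\circ f_{\Ecategory}\Rightarrow f_{\Bobject}\circ p_1$ to $\varphi: f_{\Ecategory}\Rightarrow p$ after substituting $p_2=f_{\Bobject}=\id{\Bcategory}$ is precisely that immediate verification. Your remark on the orientation of the 2-cell, distinguishing the comprehension case from the dual quotient case of Def.~\ref{definition/basic-quotient}, is a useful and accurate observation.
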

One main contribution of the paper is to revisit in this 2-categorical style
the hierarchy of comprehension categories described by Jacobs \cite{jacobs-paper}.
To that purpose, we introduce three corresponding levels of comprehension structures,
each of them coming with an elementary and concise 2-categorical formulation,
as depicted in the figure below:
\begin{itemize}
\item comprehension structures, Def.~\ref{definition/comprehension-structure} 
as reformulated in Prop.~\ref{proposition/comprehension-structure},
\item comprehension structures with section, Def.~\ref{definition/comprehension-structure-with-section}
as reformulated in Prop.~\ref{proposition/comprehension-structure-with-section},
\item comprehension structures with image, Def.~\ref{definition/comprehension-structure-with-image}
as formulated in Def.~\ref{definition/comprehension-structure-with-image-in-K}.
\end{itemize}
One basic observation is that our minimalist notion 
of comprehension structure (Def.~\ref{definition/comprehension-structure})
generalizes Jacobs' notion of comprehension category, by relaxing 
the assumption that the associated functor $\jacobscomprehension:\Ecategory\to\Barrowcategory$
in (\ref{equation/Barrowcategory-cod-functor}) transports every $p$-cartesian map of $\Ecategory$
to a $\codfunctor$-cartesian map of $\Barrowcategory$, that is, to a pullback diagram 
of the form~(\ref{equation/square-diagram-for-comprehension}) in the category $\Bcategory$.
This observation underlies the first layer (in dark green) of our classification below.
\begin{center}
{\includegraphics[height=9em]{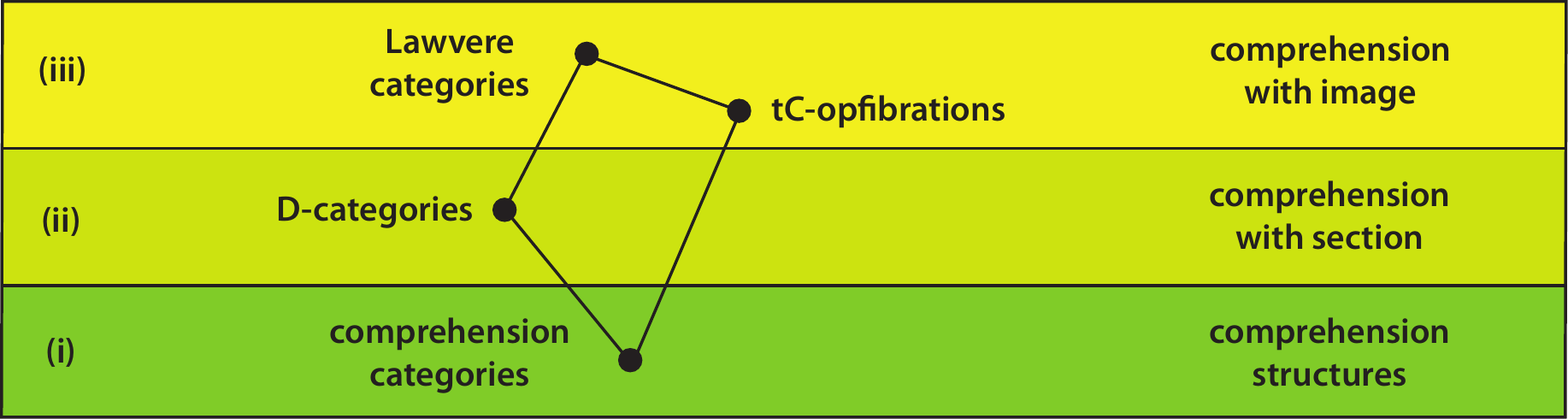}}
\end{center}
%

\medbreak

\noindent
\textbf{(ii) Comprehension structures with section.}
We move to the next layer and consider Ehrhard's notion of $D$-category~\cite{ehrhard,jacobs-paper}
which is based on a convenient but somewhat mysterious recipe
to equip a functor $p:\Ecategory\to\Bcategory$ with a comprehension structure $(\comprehensionzero,\injectionzero)$.
The recipe~\cite{ehrhard,jacobs-paper}
works in two stages: (1)~first, one equips the functor $p$ with a section $\smallstar:\Bcategory\to\Ecategory$, 
(2)~then one requires that the section $\smallstar$ has a right adjoint $\comprehensionzero:\Ecategory\to\Bcategory$.
Recall that a section $\smallstar:\Bcategory\to\Ecategory$ is a functor such that $p\circ\smallstar=\id{\Bcategory}$.
This leads us to the following definition:
\begin{definition}\label{definition/comprehension-structure-with-section}
A comprehension structure with section on a functor $p:\Ecategory\to\Bcategory$
is a section $\smallstar:\Bcategory\to\Ecategory$
together with a right adjoint functor $\comprehensionzero:\Ecategory\to\Bcategory$.
\end{definition}
One astonishing aspect of the definition is 
that the natural transformation $\injectionzero:\comprehensionzero\Rightarrow p$ 
of the associated comprehension structure $(\comprehensionzero,\iota)$ is not given explicitly,
but derived as the image by the functor $p:\Ecategory\to\Bcategory$ 
of the counit $\smallstar\circ\comprehensionzero\Rightarrow \id{\Ecategory}$ of the adjunction $\smallstar\dashv\comprehensionzero$.
From this it follows that the relationship between the natural transformation $\iota$ and
the two functors $\smallstar,\comprehensionzero$ is not entirely obvious from a conceptual point of view.
We clarify this point by observing here that the original adjunction $\smallstar\dashv\comprehensionzero$ 
of Def.~\ref{definition/comprehension-structure-with-section} living in $\Ktwocategory=\Cat$
is the ``emerged part'' of a more fundamental adjunction $\smallstar\dashv(\comprehensionzero,\injectionzero)$
living in the 2-category $\ArrowLax{\Cat}$, and where the natural transformation $\iota$ is thus integrated.
A preliminary observation is that
\begin{proposition}\label{proposition/section}
A section of the functor $p:\Ecategory\to\Bcategory$
is the same thing as a strict morphism in $\Cat/\Cat$ of the form
\begin{equation}\label{equation/section}
\begin{tikzcd}[column sep=1em]
(s_{\Ecategory}, \id{\Bcategory}) \quad : \quad  (\Bcategory,\Bcategory,\id{\Bcategory}:\Bcategory\to \Bcategory) \arrow[rr]
&& (\Ecategory,\Bcategory,p:\Ecategory\to \Bcategory) 
\end{tikzcd}
\end{equation}
\end{proposition}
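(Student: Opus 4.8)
The plan is simply to unfold the definition of a strict morphism in $\Cat/\Cat$ for the particular source and target appearing in~(\ref{equation/section}), and to check that the data and the single equation it produces are literally the data and equation defining a section of $p$.

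First I would recall from the discussion preceding the statement that a morphism in $\ArrowLax{\Cat}$ from $(\Eobject_1,\Bobject_1,p_1)$ to $(\Eobject_2,\Bobject_2,p_2)$ is a triple $(f_\Eobject,f_\Bobject,\varphi)$ consisting of two functors $f_\Eobject:\Eobject_1\to\Eobject_2$ and $f_\Bobject:\Bobject_1\to\Bobject_2$ together with a $2$-cell $\varphi:p_2\circ f_\Eobject\Rightarrow f_\Bobject\circ p_1$, and that the morphism is \emph{strict} exactly when $\varphi$ is the identity, that is, when the equation $p_2\circ f_\Eobject=f_\Bobject\circ p_1$ holds on the nose. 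Specializing the source to $(\Bcategory,\Bcategory,\id{\Bcategory})$ so that $p_1=\id{\Bcategory}$, and the target to $(\Ecategory,\Bcategory,p)$ so that $p_2=p$, a strict morphism of the shape~(\ref{equation/section}) is thus precisely a pair of functors $f_\Eobject:\Bcategory\to\Ecategory$ and $f_\Bobject:\Bcategory\to\Bcategory$ satisfying $p\circ f_\Eobject=f_\Bobject\circ\id{\Bcategory}$.

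Next, since the second component is constrained to be $f_\Bobject=\id{\Bcategory}$, the strictness equation collapses to $p\circ f_\Eobject=\id{\Bcategory}$. Writing $\smallstar:=f_\Eobject$ for the first component $s_\Ecategory$, this is exactly the defining condition $p\circ\smallstar=\id{\Bcategory}$ of a section $\smallstar:\Bcategory\to\Ecategory$ of $p$. Conversely, any section $\smallstar$ yields the pair $(\smallstar,\id{\Bcategory})$, which satisfies the strictness equation by definition and is therefore a strict morphism of the form~(\ref{equation/section}); the two assignments are patently mutually inverse, which gives the claimed identification.

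I do not anticipate any genuine obstacle here: the statement is a pure bookkeeping of definitions, and its entire content is that, for strict morphisms, the structural $2$-cell $\varphi$ disappears and the commuting square degenerates to the single equation $p\circ\smallstar=\id{\Bcategory}$. The only point deserving a moment's attention is the orientation of $\varphi$ and the precise composites it relates, so as to confirm that the surviving equation is $p\circ\smallstar=\id{\Bcategory}$ rather than some variant; once this is checked, nothing further is required.
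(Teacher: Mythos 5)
Your unfolding of the definition is correct and is exactly how the paper treats this statement, which it presents as an immediate observation: a strict morphism of the indicated shape with second component $\id{\Bcategory}$ is precisely a functor $s_{\Ecategory}:\Bcategory\to\Ecategory$ satisfying $p\circ s_{\Ecategory}=\id{\Bcategory}$. Nothing is missing.
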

We will prove in the course of the paper (see \S\ref{section/comprehension-with-section},
Prop.~\ref{proposition/comprehension-structure-with-section-in-K}) 
that the adjunction $\smallstar\dashv\comprehensionzero$ 
in Def.~\ref{definition/comprehension-structure-with-section}
may be equivalently formulated as an adjunction in $\ArrowLax{\Cat}$
between the section $\smallstar$ seen as a strict morphism~(\ref{equation/section})
and the comprehension structure $(\comprehensionzero,\iota)$
seen as a morphism~(\ref{equation/comprehension-structure}).
%
This property establishes the secretly 2-categorical nature 
of the notion (Def.~\ref{definition/comprehension-structure-with-section})
of comprehension structure with section:
%
\begin{proposition}\label{proposition/comprehension-structure-with-section}
A comprehension structure with section is a comprehension structure~(\ref{equation/comprehension-structure})
right adjoint to a section~(\ref{equation/section}) in the 2-category $\ArrowLax{\Cat}$.
\end{proposition}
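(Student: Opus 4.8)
The plan is to prove the equivalence by unwinding what an adjunction in $\ArrowLax{\Cat}$ between the morphisms~(\ref{equation/section}) and~(\ref{equation/comprehension-structure}) actually amounts to, and matching it against Def.~\ref{definition/comprehension-structure-with-section}. First I would record the two morphisms involved. By Prop.~\ref{proposition/section} the section $\smallstar$ is the strict morphism $(s_{\Ecategory},\id{\Bcategory})$ from $(\Bcategory,\Bcategory,\id{\Bcategory})$ to $(\Ecategory,\Bcategory,p)$, and by Prop.~\ref{proposition/comprehension-structure} the comprehension structure $(\comprehensionzero,\injectionzero)$ is the morphism $(f_{\Ecategory},\id{\Bcategory},\varphi)$ going back from $(\Ecategory,\Bcategory,p)$ to $(\Bcategory,\Bcategory,\id{\Bcategory})$, where $f_{\Ecategory}=\comprehensionzero$ and the $2$-cell $\varphi$ encodes $\injectionzero$. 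So both morphisms have the identity as their $\Bcategory$-component, and asserting an adjunction between them in $\ArrowLax{\Cat}$ is asking for a unit and counit in the hom-categories of $\ArrowLax{\Cat}$ satisfying the triangle identities.

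The key technical step is to make the $2$-cells of $\ArrowLax{\Cat}$ explicit and see that, because every $\Bcategory$-component here is $\id{\Bcategory}$, a $2$-cell between two such parallel morphisms reduces to a single $2$-cell between their $\Ecategory$-components that is compatible with the $\varphi$'s. I would spell this out: a $2$-cell from $(g_{\Ecategory},\id{\Bcategory},\psi_1)$ to $(h_{\Ecategory},\id{\Bcategory},\psi_2)$ is a natural transformation $g_{\Ecategory}\Rightarrow h_{\Ecategory}$ making the evident compatibility square with $\psi_1,\psi_2$ commute. Applying this to the unit $\eta$ (a $2$-cell on $(\Bcategory,\Bcategory,\id{\Bcategory})$, i.e.\ living over $\id{\Bcategory}\circ\smallstar'$ and the composite $\comprehensionzero'\circ\smallstar'$) and to the counit $\varepsilon$ (a $2$-cell on $(\Ecategory,\Bcategory,p)$), one finds that the $\Ecategory$-components of $\eta$ and $\varepsilon$ are precisely natural transformations $\id{\Bcategory}\Rightarrow\comprehensionzero\circ\smallstar$ and $\smallstar\circ\comprehensionzero\Rightarrow\id{\Ecategory}$. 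These are exactly the unit and counit of an adjunction $\smallstar\dashv\comprehensionzero$ in $\Cat$.

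What remains is to check that the triangle identities in $\ArrowLax{\Cat}$ are equivalent to the triangle identities in $\Cat$, and, crucially, that the compatibility of $\eta$ and $\varepsilon$ with the $\varphi$'s forces $\injectionzero$ to be exactly the derived natural transformation $p\varepsilon$ described after Def.~\ref{definition/comprehension-structure-with-section} — namely the image under $p$ of the counit $\smallstar\circ\comprehensionzero\Rightarrow\id{\Ecategory}$. Since triangle identities are equations between vertical/horizontal composites of $2$-cells, and since the $\Bcategory$-components are all identities so that the $\ArrowLax{\Cat}$ composites restrict on the $\Ecategory$-side to the ordinary $\Cat$ composites, these should match up directly. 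The whole argument is thus a careful bookkeeping of components, invoking Prop.~\ref{proposition/comprehension-structure-with-section-in-K} (promised for \S\ref{section/comprehension-with-section}) to license the claim that the $\Cat$-adjunction $\smallstar\dashv\comprehensionzero$ lifts to the claimed $\ArrowLax{\Cat}$-adjunction.

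I expect the main obstacle to be the compatibility condition relating the $2$-cell $\varphi$ (encoding $\injectionzero$) to the counit: one has to verify that the single coherence square imposed on a $2$-cell of $\ArrowLax{\Cat}$ is exactly the equation $\injectionzero = p\varepsilon$ (up to the identities $p\circ\smallstar=\id{\Bcategory}$), rather than some weaker or differently-oriented condition. Getting the $2$-cell orientations right in the lax $\ArrowLax{\Cat}$ convention — where $\varphi: p_2\circ f_{\Eobject}\Rightarrow f_{\Bobject}\circ p_1$ — is where sign-like bookkeeping errors are most likely to creep in, so I would treat that square as the crux and verify it by direct component-chasing.
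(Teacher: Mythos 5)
Your proposal is correct and takes essentially the same route as the paper: the paper packages your component-by-component unwinding into the general mate lemma (Prop.~\ref{proposition/lifting}), by which an adjunction in $\ArrowLax{\Ktwocategory}$ is a pair of adjunctions in $\Ktwocategory$ whose lax 2-cells satisfy the mate condition, and since both $\Bcategory$-components here are $\id{\Bcategory}$ this reduces to $\smallstar\dashv\comprehensionzero$ in $\Cat$ together with the equation $\iota=p\circ\varepsilon$ --- exactly the compatibility square you identify as the crux. The only cosmetic caveat is that a 2-cell of $\ArrowLax{\Cat}$ is a \emph{pair} $(\theta_{\Bcategory},\theta_{\Ecategory})$ even when the 1-cell $\Bcategory$-components are identities, so the reduction to a single $\Ecategory$-component is a choice (take $\theta_{\Bcategory}=\id{}$) rather than an automatic collapse; this does not affect the argument.
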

The resulting 2-categorical notion of \emph{comprehension structure with section}
(Prop. \ref{proposition/comprehension-structure-with-section}) captures
the essence of the notion of $D$-category, and generalizes it
in an interesting and useful way to the categories of algebras and coalgebras,
see \S\ref{section/comprehension-with-section} and \S\ref{section/application} for a discussion.
%

\medbreak

\noindent
\textbf{(iii) Comprehension structures with image.}
We move finally to the next layer of our hierarchy,
and observe that the functor $p:\Ecategory\to\Bcategory$ is required to be an opfibration
in both notions of Lawvere category and of $tC$-opfibration \cite{jacobs-paper,fumex-phdthesis}.
From this follows that the functor $p:\Ecategory\to\Bcategory$ has an image structure,
in the sense elaborated in \S\ref{section/functors-image-structure} of this paper.
%
%
This additional image structure on the functor $p$ enables one to construct a functor 
\begin{equation}\label{equation/image-functor}
\begin{tikzcd}[column sep =1em]
\imagefunctor \quad : \quad \Barrowcategory \arrow[rr] && \Ecategory
\end{tikzcd}
\end{equation}
from the arrow category $\Barrowcategory$ of the basis category $\Bcategory$
to the category~$\Ecategory$.
The functor $\imagefunctor$ transports every morphism $f:A\to B$ of the basis category~$\Bcategory$
to an object $\imagefunctor(f)$ in the fiber category~$\Ecategory_B$ of the object~$B$,
called the image of $f:A\to B$, and satisfying the expected universality property,
see \S\ref{section/functors-image-structure} for details.
%
By construction, the image functor (\ref{equation/image-functor})
makes the diagram below commute:
\begin{equation}\label{equation/Barrowcategory-image-functor}
\begin{tikzcd}[column sep = .8em, row sep = .8em]
\Ecategory\arrow[rrdd,"p"{swap}] &&&& \Barrowcategory\arrow[lldd,"{\codfunctor}"]\arrow[llll,"{\imagefunctor}"{swap}]
\\
\\
&& \Bcategory
\end{tikzcd}
\end{equation}
In order to recover a comprehension structure (\ref{equation/Barrowcategory-cod-functor}),
the definition of a Lawvere category requires that the functor $\imagefunctor$
has a right adjoint $\jacobscomprehension:\Ecategory\to\Barrowcategory$ 
in the fibered sense above the category~$\Bcategory$.
This leads us to the following definition.
\begin{definition}\label{definition/comprehension-structure-with-image}
A comprehension structure with image on a functor $p:\Ecategory\to\Bcategory$
is a functor $\imagefunctor:\Barrowcategory\to\Ecategory$
together with a right adjoint $\jacobscomprehension:\Ecategory\to \Barrowcategory$
in the fibered sense above $\Bcategory$.
\end{definition}
On the other hand, and somewhat surprisingly,
the definition of $tC$-opfibration is apparently weaker,
since it only requires that the functor $p:\Ecategory\to\Bcategory$ 
has a comprehension structure with section, in the sense of Def.~\ref{definition/comprehension-structure-with-section}.
In order to clarify the situation,
and to get a clean and harmonious picture,
we establish that every $tC$-opfibration has comprehension with image
(in the sense of Def~\ref{definition/comprehension-structure-with-image})
using the following statement, which applies in particular
to the case of an opfibration $p:\Ecategory\to\Bcategory$:
\begin{proposition}\label{proposition/comprehension-structure-with-image-is-imagreand-comprehension-structure-with-section} 
Suppose that the functor $p:\Ecategory\to\Bcategory$ has an image structure.
In that case, every comprehension structure with section (in the sense of Def. \ref{definition/comprehension-structure-with-section})
defines a comprehension structure with image (in the sense of Def.~\ref{definition/comprehension-structure-with-image}).
\end{proposition}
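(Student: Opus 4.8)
The plan is to build the required right adjoint $\jacobscomprehension$ directly out of the comprehension-with-section data, and then to recognise the desired fibered adjunction $\imagefunctor\dashv\jacobscomprehension$ as nothing but the adjunction $\smallstar\dashv\comprehensionzero$ read fibrewise. First I would define $\jacobscomprehension:\Ecategory\to\Barrowcategory$ on an object $X$ by $\jacobscomprehension(X)=(\injectionzero_X:\comprehensionzero X\to pX)$, where, exactly as in Def.~\ref{definition/comprehension-structure-with-section}, $\injectionzero_X:=p(\varepsilon_X)$ is the component at $X$ of the natural transformation $\injectionzero$ induced by the counit $\varepsilon:\smallstar\comprehensionzero\Rightarrow\id{\Ecategory}$ of $\smallstar\dashv\comprehensionzero$ (using $p\smallstar=\id{\Bcategory}$ to see that $\injectionzero_X:\comprehensionzero X\to pX$). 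On morphisms $\jacobscomprehension$ acts through $\comprehensionzero$ together with the naturality squares of $\injectionzero$; that same naturality says precisely $\codfunctor\circ\jacobscomprehension=p$, so $\jacobscomprehension$ is a functor over $\Bcategory$. Since $p\circ\imagefunctor=\codfunctor$ by~(\ref{equation/Barrowcategory-image-functor}), both functors live over $\Bcategory$ and it makes sense to ask for a fibered adjunction.

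Next I would verify this fibered adjunction by exhibiting, for each $0$-cell $B$ and all $f:A\to B$ in $\Bcategory/B$ and $X\in\Ecategory_B$, a natural bijection
$$\Ecategory_B(\imagefunctor(f),X)\ \cong\ (\Bcategory/B)(f,\injectionzero_X).$$
The key is that both sides compute the same set of maps ``over $f$''. On the comprehension side, an element of $(\Bcategory/B)(f,\injectionzero_X)$ is a map $g:A\to\comprehensionzero X$ with $\injectionzero_X\circ g=f$; transposing along $\smallstar\dashv\comprehensionzero$ sends $g$ to $\tilde g=\varepsilon_X\circ\smallstar(g):\smallstar A\to X$, and since $\injectionzero_X=p(\varepsilon_X)$ and $p\smallstar=\id{\Bcategory}$ one computes $p(\tilde g)=p(\varepsilon_X)\circ p(\smallstar g)=\injectionzero_X\circ g$, so the triangle condition becomes exactly $p(\tilde g)=f$. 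Hence $(\Bcategory/B)(f,\injectionzero_X)\cong\{\,h:\smallstar A\to X \mid ph=f\,\}$. On the image side, the expected universality property of the image structure presents $\imagefunctor(f)$ as the pushforward $\push{f}{A}$ of the section's value $\smallstar A$ along $f$, equipped with a universal arrow $\Pushmap{f}:\smallstar A\to\imagefunctor(f)$ over $f$; precomposition $k\mapsto k\circ\Pushmap{f}$ then yields $\Ecategory_B(\imagefunctor(f),X)\cong\{\,h:\smallstar A\to X \mid ph=f\,\}$ as well. Composing the two isomorphisms gives the required bijection.

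Finally I would check that this family of bijections is natural in $X$ and in $f$ and is compatible with reindexing along maps of $\Bcategory$, so that the induced unit and counit are vertical and the adjunction is genuinely fibered in the sense of Def.~\ref{definition/comprehension-structure-with-image}; these are routine diagram chases once the object-level bijection is in place. The main obstacle, and the only step that is not formal, is the identification on the image side: one must show that the image functor $\imagefunctor$ supplied by the ambient image structure agrees, up to canonical isomorphism in each fiber, with the pushforward $\push{f}{A}$ of the section, i.e.\ that the ``expected universality property'' of $\imagefunctor(f)$ is exactly the representability of $\{\,h:\smallstar A\to{-}\mid ph=f\,\}$ by $\imagefunctor(f)$ and $\Pushmap{f}$. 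This is precisely the place where the two hypotheses, the image structure on $p$ and the comprehension structure with section, are glued together; once this representation is secured, the adjunction $\imagefunctor\dashv\jacobscomprehension$ is obtained by transporting $\smallstar\dashv\comprehensionzero$ across it.
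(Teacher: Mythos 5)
Your proof is correct and follows essentially the same route as the paper's: both establish the adjunction by factoring through the hom-set $\{\,h:\smallstar_A\to X \mid p(h)=f\,\}$, using the opcartesianness of $\Pushmap{f}$ on one side and the adjunction $\smallstar\dashv\comprehensionzero$ (tested against $\iota=p\circ\varepsilon$) on the other --- the paper merely phrases this with generalized elements in an arbitrary 2-category equipped with a path object, rather than fiberwise in $\Cat$. The only quibble is that your closing ``main obstacle'' is not one: $\imagefunctor(f)=\push{f}{A}$ holds by definition of the image functor (Cor.~\ref{corollary/image-functor}), so the representability of $\{\,h:\smallstar_A\to{-} \mid p(h)=f\,\}$ by $\imagefunctor(f)$ and $\Pushmap{f}$ is exactly the opcartesianness hypothesis of Def.~\ref{definition/imagestructureonCat} (with $v=\id{B}$), not something left to prove.
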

\paragraph*{Illustration: inductive reasoning on algebras, coinductive reasoning on coalgebras}
Suppose given a functor $p:\Ecategory\to\Bcategory$ equipped with a comprehension structure 
with section $\smallstar:\Bcategory\to\Ecategory$, where the categories 
$\Ecategory$ and $\Bcategory$ are moreover equipped with endofunctors 
$\Ffunctor:\Bcategory\to\Bcategory$ and $\Gfunctor:\Ecategory\to\Ecategory$ 
related by a distributivity law
\begin{equation}\label{equation/distributivity-law}
\begin{tikzcd}[column sep = .8em, row sep = .8em]
\delta \quad : \quad \Ffunctor\circ p \arrow[rr,double,-implies] && p\circ \Gfunctor
\quad : \quad \Ecategory\arrow[rr] && \Bcategory.
\end{tikzcd}
\end{equation}
One guiding ambition of our 2-categorical account of comprehension structures
is to explain by conceptual means the recent characterization by Fumex, Ghani and Johann 
\cite{fumex-phdthesis,ghani-johann-fumex} of the initial $\Gfunctor$-algebra of~$\Ecategory$
as the section $\smallstar_{A}$ of the initial $\Ffunctor$-algebra $\mu F$
of the basis category $\Bcategory$.
To that purpose, we describe in \S\ref{section/application} the necessary and sufficient conditions 
which characterize when the distributivity law (\ref{equation/distributivity-law}) on a comprehension structure 
with section $p:\Ecategory\to\Bcategory$ induces a comprehension structure with section
$\Alg{}(p):\Alg{\Gfunctor}(\Ecategory)\to\Alg{\Ffunctor}(\Bcategory)$ on the associated categories of algebras.
In this situation, we obtain a simple conceptual explanation for the forementioned result (\cite{ghani-johann-fumex}, Thm 2.10)
by Fumex, Ghani and Johann:
\begin{corollary}\label{corollary/comprehension-structures}
The comprehension structure with section $\smallstar:\Bcategory\to\Ecategory$
lifts to a comprehension structure with section $\smallstar:\Alg{\Ffunctor}(\Bcategory)\to\Alg{\Gfunctor}(\Ecategory)$
which is left adjoint to comprehension $\comprehensionzero$
and thus transports the initial $\Ffunctor$-algebra $\mu \Ffunctor$ to the initial $\Gfunctor$-algebra $\mu \Gfunctor=\smallstar_{\mu \Ffunctor}$.
\end{corollary}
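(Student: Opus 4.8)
The plan is to deduce the corollary from a single universal property, namely that a left adjoint preserves initial objects, once the comprehension structure with section has been transported to the categories of algebras. Everything else is bookkeeping about this transport, which is supplied by the analysis of \S\ref{section/application}.

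First I would invoke that analysis. Under the conditions characterising when the distributivity law~(\ref{equation/distributivity-law}) lifts a comprehension structure with section along $\Alg{}(p)$, both the section $\smallstar:\Bcategory\to\Ecategory$ and its right adjoint $\comprehensionzero:\Ecategory\to\Bcategory$ lift to functors
$$
\smallstar:\Alg{\Ffunctor}(\Bcategory)\to\Alg{\Gfunctor}(\Ecategory),
\qquad
\comprehensionzero:\Alg{\Gfunctor}(\Ecategory)\to\Alg{\Ffunctor}(\Bcategory)
$$
that commute with the forgetful functors back to $\Bcategory$ and $\Ecategory$. The lifted section sends an $\Ffunctor$-algebra $(B,b)$ to the $\Gfunctor$-algebra whose carrier is $\smallstar B$ and whose structure map is obtained by composing $\smallstar b$ with a comparison 2-cell $\Gfunctor\circ\smallstar\Rightarrow\smallstar\circ\Ffunctor$ extracted from $\delta$ via the adjunction $\smallstar\dashv\comprehensionzero$; the lift of $\comprehensionzero$ is described dually.

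Next I would record that this lifted pair is again an adjunction $\smallstar\dashv\comprehensionzero$, now between $\Alg{\Ffunctor}(\Bcategory)$ and $\Alg{\Gfunctor}(\Ecategory)$, hence a comprehension structure with section on $\Alg{}(p)$ in the sense of Def.~\ref{definition/comprehension-structure-with-section} and of its 2-categorical reformulation Prop.~\ref{proposition/comprehension-structure-with-section}. The comparison 2-cells used to build the two lifts are conjugate under $\smallstar\dashv\comprehensionzero$, so the original unit and counit lift; the genuine verification --- and the place where the coherence of $\delta$ with the adjunction is consumed --- is to check that these lifted unit and counit are morphisms of algebras, i.e.\ that they commute with the transported structure maps.

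Finally I would conclude by the universal property. The lifted $\smallstar$ is a left adjoint, so it preserves colimits, and in particular the initial object. By definition the initial $\Ffunctor$-algebra $\mu\Ffunctor$ is the initial object of $\Alg{\Ffunctor}(\Bcategory)$, whence its image $\smallstar(\mu\Ffunctor)$ is initial in $\Alg{\Gfunctor}(\Ecategory)$, i.e.\ is the initial $\Gfunctor$-algebra $\mu\Gfunctor$. Since the lift commutes with the forgetful functors, the carrier of $\mu\Gfunctor$ is $\smallstar$ applied to the carrier of $\mu\Ffunctor$, giving $\mu\Gfunctor=\smallstar_{\mu\Ffunctor}$ as claimed. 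The main obstacle is thus not this final argument, which is formal, but the transport step: establishing that the adjunction $\smallstar\dashv\comprehensionzero$ truly lifts, i.e.\ that the algebra structures transported along $\delta$ are compatible and that the unit and counit become algebra morphisms. Once the lifted adjunction is in hand, the identification $\mu\Gfunctor=\smallstar_{\mu\Ffunctor}$ is immediate from preservation of initial objects by left adjoints.
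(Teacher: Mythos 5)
Your proposal is correct and follows essentially the same route as the paper: invoke the lifting analysis of \S\ref{section/application} to obtain the adjunction $\smallstar\dashv\comprehensionzero$ between $\Alg{\Ffunctor}(\Bcategory)$ and $\Alg{\Gfunctor}(\Ecategory)$, observe it is a comprehension structure with section on the lifted functor, and conclude because a left adjoint preserves initial objects, with commutation with the forgetful functors identifying the carrier of $\mu\Gfunctor$ as $\smallstar_{\mu\Ffunctor}$. One small imprecision: the comparison 2-cell $\sigma:\Gfunctor\circ\smallstar\Rightarrow\smallstar\circ\Ffunctor$ is not ``extracted from $\delta$ via the adjunction $\smallstar\dashv\comprehensionzero$'' --- in the paper it is independent data, required to be invertible and to satisfy a coherence identity with $\delta$, and the lifted adjunction is then produced by the mate criterion of Prop.~\ref{proposition/lifting} applied in $\Endo{\Cat}\subseteq\ArrowLax{\Cat}$ (with $\widetilde{\sigma}$ the mate of $\sigma^{-1}$), rather than by a hands-on check that the unit and counit are algebra morphisms, though that check is what the criterion packages.
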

We proceed dually in the case of quotient structures and obtain necessary
and sufficient conditions to ensure that 
\begin{corollary}\label{corollary/quotient-structures}
The quotient structure with section $\smallstar:\Bcategory\to\Ecategory$ 
lifts to a quotient structure with section $\smallstar:\CoAlg{\Ffunctor}(\Bcategory)\to\CoAlg{\Gfunctor}(\Ecategory)$
which is right adjoint to quotient $\quotient{-}$ and thus transports the terminal $\Ffunctor$-coalgebra 
$\nu \Ffunctor$ to the terminal $\Gfunctor$-coalgebra $\nu \Gfunctor=\smallstar_{\nu \Ffunctor}$.
\end{corollary}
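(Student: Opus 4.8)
The plan is to deduce Corollary~\ref{corollary/quotient-structures} from Corollary~\ref{corollary/comprehension-structures} by a formal dualisation, in exactly the manner in which Fumex passes from the notion of $tC$-opfibration to the notion of $QCE$-category, namely by reversing the orientation of every morphism in $\Ecategory$ and in $\Bcategory$. Concretely, I would apply the $2$-functor $(-)^{op}:\Cat\to\Cat$, which preserves the direction of every functor but reverses the direction of every natural transformation, and which therefore exchanges left and right adjoints: an adjunction $L\dashv R$ is carried to the adjunction $R^{op}\dashv L^{op}$.

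First I would set up the translation dictionary. A quotient structure with section on $p:\Ecategory\to\Bcategory$ is, dually to Def.~\ref{definition/comprehension-structure-with-section}, a section $\smallstar:\Bcategory\to\Ecategory$ together with a \emph{left} adjoint $\quotient{-}\dashv\smallstar$, the natural transformation $\projectionzero:p\Rightarrow\quotient{-}$ of the associated quotient structure (Def.~\ref{definition/basic-quotient}) being recovered as the image by $p$ of the unit of this adjunction. Applying $(-)^{op}$ turns $\smallstar$ into a section $\smallstar^{op}$ of $p^{op}:\Ecategory^{op}\to\Bcategory^{op}$, and turns $\quotient{-}\dashv\smallstar$ into $\smallstar^{op}\dashv\quotient{-}^{op}$, so that $\smallstar^{op}$ acquires a \emph{right} adjoint: this is precisely a comprehension structure with section on $p^{op}$ in the sense of Def.~\ref{definition/comprehension-structure-with-section}, with $\projectionzero$ dualising into the comprehension $2$-cell $\injectionzero$. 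Likewise a $\Gfunctor$-coalgebra $c:E\to\Gfunctor E$ in $\Ecategory$ is the same datum as a $\Gfunctor^{op}$-algebra $c^{op}:\Gfunctor^{op}E\to E$ in $\Ecategory^{op}$, whence an isomorphism $\CoAlg{\Gfunctor}(\Ecategory)\cong\Alg{\Gfunctor^{op}}(\Ecategory^{op})^{op}$ and similarly for $\Ffunctor$, under which the terminal coalgebra $\nu\Ffunctor$ corresponds to the initial algebra $\mu\Ffunctor^{op}$. Finally, the distributivity law governing the coinductive case, of the form $p\circ\Gfunctor\Rightarrow\Ffunctor\circ p$, is carried by $(-)^{op}$ to a $2$-cell $\Ffunctor^{op}\circ p^{op}\Rightarrow p^{op}\circ\Gfunctor^{op}$, which is exactly a distributivity law~(\ref{equation/distributivity-law}) in the opposite setting.

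With this dictionary in place, I would simply invoke Corollary~\ref{corollary/comprehension-structures} applied to the functor $p^{op}:\Ecategory^{op}\to\Bcategory^{op}$ equipped with the comprehension structure with section $\smallstar^{op}$ and the dualised distributivity law. It yields a comprehension structure with section $\smallstar^{op}:\Alg{\Ffunctor^{op}}(\Bcategory^{op})\to\Alg{\Gfunctor^{op}}(\Ecategory^{op})$, left adjoint to comprehension, which transports the initial $\Ffunctor^{op}$-algebra to the initial $\Gfunctor^{op}$-algebra $\mu\Gfunctor^{op}=\smallstar_{\mu\Ffunctor^{op}}$. Reading this conclusion back through the dictionary — exchanging algebras for coalgebras, initial for terminal, and left adjoint for right adjoint — gives precisely the asserted quotient structure with section $\smallstar:\CoAlg{\Ffunctor}(\Bcategory)\to\CoAlg{\Gfunctor}(\Ecategory)$, right adjoint to quotient, transporting $\nu\Ffunctor$ to $\nu\Gfunctor=\smallstar_{\nu\Ffunctor}$.

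The step I expect to require the most care is the verification that the \emph{necessary and sufficient conditions} on the distributivity law isolated in \S\ref{section/application} for the inductive lift are genuinely self-dual, that is, that applying $(-)^{op}$ to the coherence equations relating $\delta$, the section $\smallstar$, and the unit and counit produces exactly the conditions required for the coinductive lift, with no hidden asymmetry. Since all of these data and equations are expressed purely through $1$-cells, $2$-cells and adjunctions in $\Cat$ (equivalently in $\ArrowLax{\Cat}$), and since $(-)^{op}$ is a strict $2$-functor that reverses $2$-cells and swaps the two legs of every adjunction, the equations should transpose term by term; but making this precise, and in particular checking that the reversal of the orientation of $\delta$ is the only change incurred, is the real content concealed behind the word ``dually''.
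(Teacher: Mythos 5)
Your proposal is correct and takes essentially the same route as the paper: the paper's own justification of Corollary~\ref{corollary/quotient-structures} is precisely the remark at the end of \S\ref{section/application} that the inductive argument ``translates immediately by duality'' to quotient structures and coalgebras, i.e.\ the formal $(-)^{op}$ dualisation you spell out. Your version merely makes explicit the dictionary (quotient/comprehension, coalgebra/algebra, terminal/initial, left/right adjoint, reversed distributivity law) that the paper leaves implicit.
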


\subsection*{Plan of the paper}
After this long and detailed introduction, we recall in \S\ref{section/arrow-two-categories} the notion of arrow 2-category
$\Ktwocategory//\Ktwocategory$ and establish in \S\ref{section/formal-adjunctions}
a simple and useful description of the formal adjunctions in this 2-category.
This leads us to formulate in \S\ref{section/comprehension-with-section}
our 2-categorical notion of \emph{comprehension with section}.
We then formulate in~\S\ref{section/path-objects} and \S\ref{section/functors-image-structure}
the notion of path object $(\Bobjectarrow,\beta)$ of an object $\Bobject$ in any 2-category~$\Ktwocategory$,
and the related notion of morphism $p:\Eobject\to\Bobject$ with an image structure.
This leads us to establish in \S\ref{section/comprehension-with-image} that
a comprehension with image $p:\Eobject\to\Bobject$ is the same thing 
as a comprehension structure with section $\smallstar:\Bobject\to\Eobject$,
whose underlying section comes with an image structure,
and thus a morphism $\imagefunctor:\Bobjectarrow\to\Eobject$.
We then illustrate in \S\ref{section/application-to-induction-and-coinduction}
the benefits of our 2-categorical approach with the example of inductive
and coinductive reasoning on algebra and coalgebra structures, 
and finally conclude in \S\ref{section/conclusion}.

\section{Definition of the arrow 2-categories $\ArrowLax{\Ktwocategory}$ and $\ArrowStrict{\Ktwocategory}$}\label{section/arrow-two-categories}
We explained in the introduction, see~(\ref{equation/lax-morphisms}),
how to define the objects and the morphisms
of the 2-category $\ArrowLax{\Ktwocategory}$ associated to a 2-category $\Ktwocategory$.
For the sake of completeness, we recall now that a 2-cell 
$$\begin{tikzcd}[column sep = 1em]
(\theta_{\Bobject}, \theta_{\Eobject}) \,\, : \,\,
\Triple{f_{\Eobject}}{\varphi}{f_{\Bobject}} \arrow[rr,-implies,double]
&&
\Triple{g_{\Eobject}}{\psi}{g_{\Bobject}}\,\, : \,\,
\Triple{\Eobject_1}{p_1}{\Bobject_1} \arrow[rr] 
&&
\Triple{\Eobject_2}{p_2}{\Bobject_2} 
\end{tikzcd}$$
of the 2-category $\ArrowLax{\Ktwocategory}$ is defined as a pair 
of 2-cells $\theta_B:f_{\Bobject}\Rightarrow g_{\Bobject}$ and $\theta_\Eobject:f_{\Eobject}\Rightarrow g_{\Eobject}$ 
of the original 2-category~$\Ktwocategory$, making the two pasting diagrams equal:
$$
\begin{tikzcd}[column sep=1em,row sep=.8em]
{\Eobject_1}
\arrow[rrrr,bend left=40,"{f_{\Eobject}}", ""{name=source-up,swap}]
\arrow[dddd,"{p_1}"{swap},""{name=target-up}]
&&&&
{\Eobject_2}
\arrow[dddd,"{p_2}",""{name=source-up,swap}]
\\
\\
\\
\\
{\Bobject_1}
\arrow[rrrr,bend left=40,"{f_{\Bobject}}", ""{name=source-down,swap}]
\arrow[rrrr,bend right=40,"{g_{\Bobject}}"{swap},""{name=target-down}]
&&&&
{\Bobject_2}
\arrow[from=source-down,to=target-down,double,-implies,"{\theta_B}"{description,pos=.5}]
\arrow[from=source-up,to=target-up,bend right=40,double,-implies,"{\varphi}"{swap,pos=.5}]
\end{tikzcd}
\quad\quad = \quad\quad
\begin{tikzcd}[column sep=1em,row sep=.8em]
{\Eobject_1}
\arrow[rrrr,bend left=40,"{f_{\Eobject}}", ""{name=source-up,swap}]
\arrow[rrrr,bend right=40,"{g_{\Eobject}}"{swap},""{name=target-up}]
\arrow[dddd,"{p_1}"{swap},""{name=target-down}]
&&&&
{\Eobject_2}
\arrow[dddd,"{p_2}",""{name=source-down,swap}]
\\
\\
\\
\\
{\Bobject_1}
\arrow[rrrr,bend right=40,"{g_{\Bobject}}"{swap}]
&&&&
{\Bobject_2}
\arrow[from=source-up,to=target-up,double,-implies,"{\theta_\Eobject}"{description,pos=.5}]
\arrow[from=source-down,to=target-down,bend left=40,double,-implies,"{\psi}"{pos=.5}]
\end{tikzcd}
$$
It is worth mentioning that, thanks to this carefully chosen definition of 2-cells,
there exists a pair of 2-functors 
\begin{equation}\label{equation/source-target}
\begin{tikzcd}[column sep = 1.5em]
\ArrowLax{\Ktwocategory} \arrow[rrrr,yshift=.4em,"{\source}"] \arrow[rrrr,yshift=-.4em,"{\target}"{swap}]
&&&&
\Ktwocategory
\end{tikzcd}
\end{equation}
defined as the expected first and second projections,
which transport every object $(\Eobject,\Bobject,p:\Eobject\to\Bobject)$ of the 2-category $\ArrowLax{\Ktwocategory}$
to the object
$$\source(\Eobject,\Bobject,p:\Eobject\to\Bobject)=\Eobject \quad\quad \quad\quad 
\target(\Eobject,\Bobject,p:\Eobject\to\Bobject)=\Bobject$$
of the underlying 2-category~$\Ktwocategory$.
Finally, let us also mention that the 2-category $\ArrowStrict{\Ktwocategory}$ of strict morphisms
in $\ArrowLax{\Ktwocategory}$ comes exactly with the same notion of 2-cell.
In other words, the inclusion 2-functor $\ArrowStrict{\Ktwocategory}\to\ArrowLax{\Ktwocategory}$
is locally fully faithful.

\section{Formal adjunctions in the 2-category $\ArrowLax{\Ktwocategory}$}\label{section/formal-adjunctions}
As explained in the introduction in the case $\Ktwocategory=\Cat$,
one main observation of the paper is that the notion of \emph{comprehension structure with section}
(Def. \ref{definition/comprehension-structure-with-section}) can be elegantly
expressed as a specific form of adjunction living in the 2-category $\ArrowLax{\Ktwocategory}$
(Prop. \ref{proposition/comprehension-structure-with-section}).
As a warm up exercise, we study the notion of formal adjunction in $\ArrowLax{\Ktwocategory}$
in the sense of Street~\cite{street:formal-monads} and relate it in full generality
to the notion of formal adjunction in the original 2-category $\Ktwocategory$.
Suppose given a pair of morphisms 
\begin{equation}\label{equation/LR}
\begin{array}{c}
\begin{tikzcd}[column sep=1em]
L \, = \, (L_\Eobject,L_\Bobject, \varphi) \quad : \quad (\Eobject_1,\Bobject_1,p_1:\Eobject_1\to \Bobject_1) \arrow[rr] && (\Eobject_2,\Bobject_2,p_2:\Eobject_2\to \Bobject_2)
\end{tikzcd}
\\
\begin{tikzcd}[column sep=1em]
R \, = \, (R_\Eobject,R_\Bobject, \psi) \quad : \quad (\Eobject_2,\Bobject_2,p_2:\Eobject_2\to \Bobject_2) \arrow[rr] && (\Eobject_1,\Bobject_1,p_1:\Eobject_1\to \Bobject_1)
\end{tikzcd}
\end{array}
\end{equation}
living in the 2-category $\ArrowLax{\Ktwocategory}$, and thus depicted
as below in the underlying 2-category $\Ktwocategory$:
$$
\begin{array}{ccc}
\begin{tikzcd}[column sep=3em,row sep=.5em]
{\Eobject_1}  \ar[dd, "{p_1}"{swap},"{}"{name=target}] \ar[rr,"{L_\Eobject}"] 
&& 
{\Eobject_2} \ar[dd,"{p_2}","{}"{swap,name=source}] 
\\
{} & {} & {} 
\\
{\Bobject_1}   \ar[rr,"{L_\Bobject}"{swap}]  & {} & {\Bobject_2}
\arrow[from=source,to=target,double,-implies,shorten <>=10pt,"{\varphi}"{description,pos=.5}]
\end{tikzcd}
&
\quad\quad\quad
&
\begin{tikzcd}[column sep=3em,row sep=.5em]
{\Eobject_1}  \ar[dd, "{p_1}"{swap},"{}"{name=source}] 
&& 
{\Eobject_2} \ar[dd,"{p_2}","{}"{swap,name=target}] \ar[ll,"{R_\Eobject}"{swap}] 
\\
{} & {} & {} 
\\
{\Bobject_1}  & {} & {\Bobject_2}  \ar[ll,"{R_\Bobject}"] 
\arrow[from=source,to=target,double,-implies,shorten <>=10pt,"{\psi}"{description,pos=.5}]
\end{tikzcd}
\end{array}
$$
By definition, a \emph{formal adjunction} between $L$ and $R$ 
in the 2-category $\ArrowLax{\Ktwocategory}$ is defined as a pair of 2-cells
$$\begin{small}
\begin{array}{c}
\begin{tikzcd}[column sep = .55em]
(\eta_\Bobject, \eta_\Eobject) \, : \,
\Triple{\idzero{\Eobject_1}}{\idzero{}}{\idzero{\Bobject_1}} 
\arrow[rr,-implies,double]
&&
\Triple{R_\Eobject\circ L_\Eobject}{(R_\Bobject\circ\varphi)(\psi\circ L_\Eobject)}{R_\Bobject\circ L_\Bobject} 
\, : \,
\Triple{\Eobject_1}{p_1}{\Bobject_1} 
\arrow[rr]
&&
\Triple{\Eobject_1}{p_1}{\Bobject_1}
\end{tikzcd}
\\
\begin{tikzcd}[column sep = .55em]
(\varepsilon_\Bobject, \varepsilon_\Eobject) \, : \,
\Triple{L_\Eobject\circ R_\Eobject}{(L_\Bobject\circ\psi)(\varphi\circ R_\Eobject)}{L_\Bobject\circ R_\Bobject} 
\arrow[rr,-implies,double]
&&
\Triple{\idzero{\Eobject_2}}{\idzero{}}{\idzero{\Bobject_2}} 
\, : \,
\Triple{\Eobject_2}{p_2}{\Bobject_2} 
\arrow[rr]
&&
\Triple{\Eobject_2}{p_2}{\Bobject_2}
\end{tikzcd}
\end{array}
\end{small}$$
in the 2-category $\ArrowLax{\Ktwocategory}$, 
satisfying the triangular equations, see \cite{street:formal-monads,Mellies09panorama} for details.
One nice consequence of this definition by generators and relations 
is that the resulting notion of formal adjunction is preserved by 2-functors.
Every formal adjunction $L\dashv R$ in $\ArrowLax{\Ktwocategory}$
is thus transported by the 2-functors~(\ref{equation/source-target}) into a pair of formal adjunctions
$L_\Bobject\dashv R_\Bobject$ and $L_\Eobject\dashv R_\Eobject$ in the underlying 2-category~$\Ktwocategory$.
From this follows that the 2-cell $\psi:p_1\circ R_\Eobject\Rightarrow R_\Bobject\circ p_2$ in $\Ktwocategory$ 
induces a 2-cell $\widetilde{\psi}:L_\Bobject\circ p_1\Rightarrow p_2\circ L_\Eobject$ 
called the \emph{mate} of $\psi$, of the form below:
$$
\begin{tikzcd}[column sep=3em,row sep=.4em]
{\Eobject_1}  \ar[dd, "{p_1}"{swap},"{}"{name=source}] \ar[rr,"{L_\Eobject}"] 
&& 
{\Eobject_2} \ar[dd,"{p_2}","{}"{swap,name=target}] 
\\
{} & {} & {} 
\\
{\Bobject_1}  \ar[rr,"{L_\Bobject}"{swap}] & {} & {\Bobject_2}  
\arrow[from=source,to=target,double,-implies,shorten <>=10pt,"{\widetilde{\psi}}"{description,pos=.5}]
\end{tikzcd}
$$
%
Suppose given two morphisms $L$ and $R$ in the 2-category $\ArrowLax{\Ktwocategory}$ as in~(\ref{equation/LR}).
In that case,
\begin{proposition}\label{proposition/characterization-of-adjunctions}
A formal adjunction $L\dashv R$ in the 2-category $\ArrowLax{\Ktwocategory}$
is the same thing as a pair of formal adjunctions $L_\Bobject\dashv R_\Bobject$ and $L_\Eobject\dashv R_\Eobject$
in the 2-category $\Ktwocategory$, such that the induced mate $\widetilde{\psi}$ 
of the 2-cell $\psi$ is the inverse of the 2-cell $\varphi$ in the 2-category $\Ktwocategory$.
\end{proposition}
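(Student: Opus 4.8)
The plan is to unfold a formal adjunction $L\dashv R$ in $\ArrowLax{\Ktwocategory}$ into its underlying data and to separate cleanly a ``componentwise'' part from a ``coherence'' part. Write the unit as $\eta=(\eta_\Bobject,\eta_\Eobject)$ and the counit as $\varepsilon=(\varepsilon_\Bobject,\varepsilon_\Eobject)$. The first observation is that identities, vertical and horizontal composition, and equalities of $2$-cells in $\ArrowLax{\Ktwocategory}$ are all computed componentwise on the underlying $1$- and $2$-cells of $\Ktwocategory$, since (\S\ref{section/arrow-two-categories}) a $2$-cell of $\ArrowLax{\Ktwocategory}$ is a pair subject to a pasting axiom. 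Consequently the two triangular equations of $L\dashv R$ hold in $\ArrowLax{\Ktwocategory}$ if and only if they hold separately for the pairs $(\eta_\Eobject,\varepsilon_\Eobject)$ and $(\eta_\Bobject,\varepsilon_\Bobject)$, which is exactly the statement that $L_\Eobject\dashv R_\Eobject$ and $L_\Bobject\dashv R_\Bobject$ are formal adjunctions in $\Ktwocategory$. This recovers and makes precise the transport along $\source$ and $\target$ already noted above, and reduces the proposition to the single remaining requirement, namely that $\eta$ and $\varepsilon$ be genuine $2$-cells of $\ArrowLax{\Ktwocategory}$, i.e. that each satisfies the pasting axiom.

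Spelling out this pasting axiom against the coherence $2$-cells $\chi=(R_\Bobject\circ\varphi)(\psi\circ L_\Eobject)$ of $R\circ L$ and $\chi'=(L_\Bobject\circ\psi)(\varphi\circ R_\Eobject)$ of $L\circ R$, the requirement on $\eta$ becomes the unit coherence equation
\[
(R_\Bobject\circ\varphi)\,(\psi\circ L_\Eobject)\,(p_1\circ\eta_\Eobject)\;=\;\eta_\Bobject\circ p_1
\]
between $2$-cells $p_1\Rightarrow R_\Bobject\circ L_\Bobject\circ p_1$, while the requirement on $\varepsilon$ becomes the counit coherence equation
\[
p_2\circ\varepsilon_\Eobject\;=\;(\varepsilon_\Bobject\circ p_2)\,(L_\Bobject\circ\psi)\,(\varphi\circ R_\Eobject)
\]
between $2$-cells $p_2\circ L_\Eobject\circ R_\Eobject\Rightarrow p_2$. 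The heart of the proof is to show that, in the presence of the two component adjunctions, these two coherence equations together are equivalent to the single assertion that the mate $\widetilde{\psi}$ is a two-sided inverse of $\varphi$.

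I would prove this by transposing each coherence equation across the appropriate adjunction, using that transposition is a bijection on hom-categories. For the unit coherence equation, whose two sides land in $R_\Bobject\circ L_\Bobject\circ p_1$, I transpose across the postcomposition adjunction $L_\Bobject\circ(-)\dashv R_\Bobject\circ(-)$: the right-hand side $\eta_\Bobject\circ p_1$ transposes to $\id{L_\Bobject\circ p_1}$ by the triangular identity for $L_\Bobject\dashv R_\Bobject$, while the left-hand side transposes, after a single application of the interchange law relating $\varepsilon_\Bobject$ and $\varphi$, to the vertical composite $\varphi\,\widetilde{\psi}$, recognising the standard formula $\widetilde{\psi}=(\varepsilon_\Bobject\circ p_2\circ L_\Eobject)(L_\Bobject\circ\psi\circ L_\Eobject)(L_\Bobject\circ p_1\circ\eta_\Eobject)$. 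As transposition is bijective, the unit coherence equation holds if and only if $\varphi\,\widetilde{\psi}=\id{L_\Bobject\circ p_1}$. Symmetrically, for the counit coherence equation, whose two sides start at $p_2\circ L_\Eobject\circ R_\Eobject$, I transpose across the precomposition adjunction $(-)\circ R_\Eobject\dashv(-)\circ L_\Eobject$: the left-hand side $p_2\circ\varepsilon_\Eobject$ transposes to $\id{p_2\circ L_\Eobject}$ by the triangular identity for $L_\Eobject\dashv R_\Eobject$, and the right-hand side transposes, again after a single interchange step on $\varphi$ and $\eta_\Eobject$, to the vertical composite $\widetilde{\psi}\,\varphi$, so that the counit coherence equation holds if and only if $\widetilde{\psi}\,\varphi=\id{p_2\circ L_\Eobject}$. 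Combining the two equivalences, both coherence equations hold simultaneously if and only if $\varphi$ is invertible with $\varphi^{-1}=\widetilde{\psi}$, which is precisely the asserted mate condition; both directions of the proposition then follow from this equivalence together with the componentwise triangular equations of the first paragraph.

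I expect the only real difficulty to be calculational bookkeeping rather than conceptual: keeping the whiskerings and vertical composites in the right order, and selecting the correct transposition in each case, the contravariant precomposition adjunction $(-)\circ R_\Eobject\dashv(-)\circ L_\Eobject$ for the counit equation as opposed to the covariant postcomposition adjunction $L_\Bobject\circ(-)\dashv R_\Bobject\circ(-)$ for the unit equation. Once the pasting axioms have been transcribed as the two coherence equations above, no idea beyond the standard mate calculus of Street is needed, and it only remains to check that the single interchange rewrite in each case reproduces exactly the defining formula for $\widetilde{\psi}$.
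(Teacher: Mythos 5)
Your argument is correct and is exactly the standard doctrinal-adjunction proof (Kelly) that the paper itself invokes without writing out: the paper states the proposition with no proof, merely noting the preservation of adjunctions by $\source$ and $\target$ and the similarity to Kelly's Prop.~1.3. Your decomposition into componentwise triangular identities plus the two pasting conditions on $\eta$ and $\varepsilon$, and the transposition of those pasting conditions into $\varphi\,\widetilde{\psi}=\id{}$ and $\widetilde{\psi}\,\varphi=\id{}$, is precisely the intended argument and the whiskering computations check out.
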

From this follows easily that 
\begin{proposition}\label{proposition/lifting}
A pair of formal adjunctions $L_\Bobject\dashv R_\Bobject$ and $L_\Eobject\dashv R_\Eobject$
in the 2-category~$\Ktwocategory$ lifts to a formal adjunction 
$\Triple{L_\Bobject}{\varphi}{L_\Eobject}\dashv\Triple{R_\Bobject}{\psi}{R_\Eobject}$
in the 2-category~$\ArrowLax{\Ktwocategory}$ precisely when 
the 2-cell $\varphi$ is invertible in $\Ktwocategory$
and the 2-cell $\psi$ coincides with the mate of $\varphi^{-1}$.
\end{proposition}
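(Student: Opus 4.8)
The plan is to derive Proposition~\ref{proposition/lifting} directly from the characterization already obtained in Proposition~\ref{proposition/characterization-of-adjunctions}, the only extra ingredient being the fact that the mate correspondence is a bijection which is moreover involutive. Indeed, once the two adjunctions $L_\Bobject\dashv R_\Bobject$ and $L_\Eobject\dashv R_\Eobject$ are fixed in $\Ktwocategory$, Proposition~\ref{proposition/characterization-of-adjunctions} tells us that the pair $\Triple{L_\Bobject}{\varphi}{L_\Eobject}$ and $\Triple{R_\Bobject}{\psi}{R_\Eobject}$ assembles into a formal adjunction in $\ArrowLax{\Ktwocategory}$ exactly when the mate $\widetilde{\psi}:L_\Bobject\circ p_1\Rightarrow p_2\circ L_\Eobject$ of $\psi$ is equal to $\varphi^{-1}$. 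So the entire task reduces to rewriting the single equation $\widetilde{\psi}=\varphi^{-1}$ in the symmetric form announced in the statement.

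First I would note that the equation $\widetilde{\psi}=\varphi^{-1}$ only typechecks once the 2-cell $\varphi:p_2\circ L_\Eobject\Rightarrow L_\Bobject\circ p_1$ is invertible: the mate $\widetilde{\psi}$ has type $L_\Bobject\circ p_1\Rightarrow p_2\circ L_\Eobject$, so the right-hand side $\varphi^{-1}$ must exist and carry exactly this type. Invertibility of $\varphi$ is therefore a necessary part of the condition, and I would isolate it as the first clause of the statement.

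Next I would invoke the calculus of mates relative to the fixed adjunctions $L_\Bobject\dashv R_\Bobject$ and $L_\Eobject\dashv R_\Eobject$. The assignment $\psi\mapsto\widetilde{\psi}$, sending a 2-cell of type $p_1\circ R_\Eobject\Rightarrow R_\Bobject\circ p_2$ to one of type $L_\Bobject\circ p_1\Rightarrow p_2\circ L_\Eobject$, is a bijection whose inverse is the mate operation taken in the opposite direction; in particular it is involutive, so the mate of $\widetilde{\psi}$ is again $\psi$. Assuming $\varphi$ invertible, I would then apply this bijection to both sides of $\widetilde{\psi}=\varphi^{-1}$: the left-hand side becomes $\psi$ by involutivity, and the right-hand side becomes the mate of $\varphi^{-1}$, which is precisely the second clause of the statement. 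Conversely, if $\varphi$ is invertible and $\psi$ is the mate of $\varphi^{-1}$, then taking the mate once more and using involutivity returns $\widetilde{\psi}=\varphi^{-1}$, so that Proposition~\ref{proposition/characterization-of-adjunctions} delivers the lift.

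The argument is essentially formal once Proposition~\ref{proposition/characterization-of-adjunctions} is available; the only point that demands genuine care is the bookkeeping of the two directions of the mate correspondence and the verification that they are mutually inverse. This involutivity is exactly what converts the single equation of Proposition~\ref{proposition/characterization-of-adjunctions} into the pair of conditions stated here, and it also guarantees that the phrase ``the mate of $\varphi^{-1}$'' unambiguously denotes the 2-cell of type $p_1\circ R_\Eobject\Rightarrow R_\Bobject\circ p_2$ recovered from $\varphi^{-1}$.
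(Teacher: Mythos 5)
Your proposal is correct and follows the same route as the paper, which simply derives this proposition from Proposition~\ref{proposition/characterization-of-adjunctions} with the remark that it ``follows easily''. Your explicit unpacking of the equation $\widetilde{\psi}=\varphi^{-1}$ via the bijectivity and involutivity of the mate correspondence is exactly the intended argument.
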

%
It should be mentioned that a similar observation is made by Kelly~\cite{kelly1974doctrinal} (Prop. 1.3)
on the 2-category of $D$-algebras derived from a 2-monad $D$ on the 2-category $\Ktwocategory$, 
see also \cite{streetReview}, Section 3.5.
It should be also noted that the characterization of formal adjunctions in $\ArrowLax{\Ktwocategory}$
is also very similar to the description of formal adjunctions in the 2-category of monoidal categories
and lax monoidal functors, see for instance \cite{Mellies09panorama}.

\section{Comprehension structures with section}\label{section/comprehension-with-section}
In this section, we suppose given a morphism $p:\Eobject\to\Bobject$ in the 2-category $\Ktwocategory$
and establish (Prop.~\ref{proposition/comprehension-structure-with-section-in-K})
that a \emph{comprehension structure with section} 
on a morphism $p:\Eobject\to\Bobject$ originally defined (Def.~\ref{definition/comprehension-structure-with-section-in-K})
as an adjunction $\smallstar\dashv\comprehensionzero$ in the 2-category $\Ktwocategory$,
can be in fact lifted (and thus equivalently defined) as a specific form of adjunction
$\smallstar\dashv(\comprehensionzero,\injectionzero)$ in the 2-category $\ArrowLax{\Ktwocategory}$.
As expected, the proof of Prop.~\ref{proposition/comprehension-structure-with-section-in-K}
relies on the characterization of formal adjunctions in $\ArrowLax{\Ktwocategory}$ just established
in the previous section, see Prop.~\ref{proposition/lifting}.
The result provides a general 2-categorical formulation of 
the proposition (Prop.~\ref{proposition/comprehension-structure-with-section})
stated in the introduction for the particular case $\Ktwocategory=\Cat$.
In order to perform our construction at this general 2-categorical level of abstraction,
we start by defining a \emph{comprehension structure} on a morphism $p:\Eobject\to\Bobject$ 
in the 2-category~$\Ktwocategory$, in a way which generalizes
what we did in the introduction (see Def. \ref{definition/comprehension-structure}) 
in the specific case $\Ktwocategory=\Cat$.
\begin{definition}\label{definition/comprehension-structure-general-K}
A comprehension structure on the morphism $p:\Eobject\to\Bobject$ 
is a pair $(\comprehensionzero,\injectionzero)$ 
consisting of a morphism $\comprehensionzero:\Eobject\to\Bobject$
and of a 2-cell $\injectionzero:\comprehensionzero\Rightarrow p$
in the 2-category~$\Ktwocategory$.
\end{definition}
We carry on as we did in the introduction (see Prop.~\ref{proposition/comprehension-structure})
and observe that
\begin{proposition}\label{proposition/comprehension-structure-in-K}
A comprehension structure $(\comprehensionzero,\injectionzero)$ on $p:\Eobject\to\Bobject$
is the same thing as a morphism in the 2-category $\ArrowLax{\Ktwocategory}$
of the form 
\begin{equation}\label{equation/comprehension-structure-in-K}
\begin{tikzcd}[column sep=1em]
(f_{\Eobject}, \id{\Bobject}, \varphi) \quad : \quad (\Eobject,\Bobject,p:\Eobject\to\Bobject) \arrow[rr] && (\Bobject,\Bobject,\id{\Bobject}:\Bobject\to \Bobject)
\end{tikzcd}
\end{equation}
\end{proposition}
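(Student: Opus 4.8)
The plan is to unfold the definition of a morphism of the displayed form~(\ref{equation/comprehension-structure-in-K}) in $\ArrowLax{\Ktwocategory}$ and to observe that its data coincide, on the nose, with the data of a comprehension structure in the sense of Def.~\ref{definition/comprehension-structure-general-K}. Since the assertion is an identification (``the same thing as''), it is enough to exhibit mutually inverse assignments between the two descriptions; as in the case $\Ktwocategory=\Cat$ treated in Prop.~\ref{proposition/comprehension-structure}, everything follows by reading off the definition of $\ArrowLax{\Ktwocategory}$ recalled in \S\ref{section/arrow-two-categories}.

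First I would spell out what a morphism of the prescribed shape
$$(f_\Eobject, \id{\Bobject}, \varphi) \,\, : \,\, (\Eobject,\Bobject,p:\Eobject\to\Bobject) \longrightarrow (\Bobject,\Bobject,\id{\Bobject}:\Bobject\to\Bobject)$$
amounts to. Per the definition~(\ref{equation/lax-morphisms}) of the morphisms of $\ArrowLax{\Ktwocategory}$, such a datum consists of a pair of $1$-cells together with a $2$-cell; here the target component of the pair is prescribed to be $f_\Bobject = \id{\Bobject}$, so the only free $1$-cell is the source component $f_\Eobject : \Eobject \to \Bobject$. The accompanying $2$-cell has type $\varphi : p_2\circ f_\Eobject \Rightarrow f_\Bobject\circ p_1$, which upon substituting $p_1 = p$, $p_2 = \id{\Bobject}$ and $f_\Bobject = \id{\Bobject}$ becomes $\varphi : \id{\Bobject}\circ f_\Eobject \Rightarrow \id{\Bobject}\circ p$.

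Next I would invoke the unit laws of the $2$-category $\Ktwocategory$: whiskering by an identity $1$-cell acts trivially, so $\id{\Bobject}\circ f_\Eobject = f_\Eobject$ and $\id{\Bobject}\circ p = p$ hold strictly, and $\varphi$ is therefore exactly a $2$-cell $f_\Eobject \Rightarrow p$. Setting $\comprehensionzero := f_\Eobject$ and $\injectionzero := \varphi$ then produces a comprehension structure $(\comprehensionzero,\injectionzero)$ on $p$ in the sense of Def.~\ref{definition/comprehension-structure-general-K}; conversely, every such pair $(\comprehensionzero,\injectionzero)$ yields a morphism of the shape~(\ref{equation/comprehension-structure-in-K}) by the same reading, with $f_\Eobject = \comprehensionzero$ and $\varphi = \injectionzero$. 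These two assignments are visibly mutually inverse, which gives the desired bijection.

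The only point requiring any care --- and it is the nearest thing to an obstacle in an otherwise purely definitional argument --- is the bookkeeping of the identity whiskerings appearing in the type of $\varphi$. In a general bicategory these equalities would hold only up to the coherent unitor isomorphisms, and one would have to verify that transporting $\varphi$ across them is compatible with composition in $\ArrowLax{\Ktwocategory}$. Because $\Ktwocategory$ is a strict $2$-category, however, the unitors are identities, the equations $\id{\Bobject}\circ f_\Eobject = f_\Eobject$ and $\id{\Bobject}\circ p = p$ hold literally, and the correspondence between comprehension structures and morphisms of the form~(\ref{equation/comprehension-structure-in-K}) is an exact identification rather than merely an equivalence up to isomorphism.
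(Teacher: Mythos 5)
Your proposal is correct and follows exactly the route the paper intends: the proposition is presented as an immediate definitional unfolding, and your identification of $f_{\Eobject}$ with $\comprehensionzero$ and of $\varphi:\id{\Bobject}\circ f_{\Eobject}\Rightarrow\id{\Bobject}\circ p$ with $\injectionzero:\comprehensionzero\Rightarrow p$ (using strictness of $\Ktwocategory$ to cancel the identity whiskerings) is precisely the intended bijection. Your closing remark about unitors in a general bicategory is a sensible caveat, though not needed here since the paper works with strict 2-categories throughout.
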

We proceed in just the same way as we did in the introduction with Prop.~\ref{proposition/section},
and characterize a section $\smallstar:\Bobject\to\Eobject$ of the morphism $p:\Eobject\to\Bobject$
as a specific form of strict morphism:
%
\begin{proposition}\label{proposition/section-in-K}
A section $\smallstar:\Bobject\to\Eobject$ of the morphism $p:\Eobject\to\Bobject$ in $\Ktwocategory$
is the same thing as a strict morphism in $\ArrowLax{\Ktwocategory}$ 
of the form
\begin{equation}\label{equation/section-in-K}
\begin{tikzcd}[column sep=1em]
(s_\Eobject, \id{\Bobject}) \quad : \quad  (\Bobject,\Bobject,\id{\Bobject}:\Bobject\to \Bobject)
\arrow[rr] && (\Eobject,\Bobject,p:\Eobject\to\Bobject) 
\end{tikzcd}
\end{equation}
\end{proposition}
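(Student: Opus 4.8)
The plan is to unwind the definition of a strict morphism of the prescribed shape and read off directly that its data amount to nothing more than a section of $p$. This is the exact 2-categorical analogue of Prop.~\ref{proposition/section}, so I expect the argument to be a pure matter of bookkeeping rather than of genuine content.

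First I would record what a morphism $(\Bobject,\Bobject,\id{\Bobject}:\Bobject\to\Bobject)\to(\Eobject,\Bobject,p:\Eobject\to\Bobject)$ in $\ArrowLax{\Ktwocategory}$ consists of. Instantiating the general definition of a morphism~(\ref{equation/lax-morphisms}) with source $\Eobject_1=\Bobject_1=\Bobject$, $p_1=\id{\Bobject}$ and target $\Eobject_2=\Eobject$, $\Bobject_2=\Bobject$, $p_2=p$, such a morphism is a triple consisting of a $1$-cell $f_\Bobject:\Bobject\to\Bobject$, a $1$-cell $f_\Eobject:\Bobject\to\Eobject$, and a $2$-cell $\varphi:p\circ f_\Eobject\Rightarrow f_\Bobject\circ\id{\Bobject}$. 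The notation $(s_\Eobject,\id{\Bobject})$ in the statement fixes the first component to $f_\Bobject=\id{\Bobject}$ and renames $f_\Eobject=s_\Eobject$, so that the only remaining free datum is the $1$-cell $s_\Eobject:\Bobject\to\Eobject$ of $\Ktwocategory$.

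Next I would impose strictness. Requiring $\varphi$ to be an identity $2$-cell forces its source and target $1$-cells to be equal; since here $f_\Bobject\circ p_1=\id{\Bobject}\circ\id{\Bobject}=\id{\Bobject}$ and $p_2\circ f_\Eobject=p\circ s_\Eobject$, this equality is precisely the equation $p\circ s_\Eobject=\id{\Bobject}$. But this is exactly the defining condition of a section $\smallstar=s_\Eobject$ of $p:\Eobject\to\Bobject$, as recalled in the introduction. Hence the assignment sending a section $\smallstar$ to the strict morphism $(\smallstar,\id{\Bobject})$, and conversely sending a strict morphism of the stated form to its $\Eobject$-component $s_\Eobject$, is a bijection between sections of $p$ and strict morphisms of the form~(\ref{equation/section-in-K}).

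There is no real obstacle in this proof; the one point deserving a moment's care is that strictness collapses the defining $2$-cell $\varphi$ all the way down to the strict section equation $p\circ s_\Eobject=\id{\Bobject}$, rather than to any weaker invertibility condition on $\varphi$ — in contrast to the role that invertibility of $\varphi$ will play for the adjunctions studied via Prop.~\ref{proposition/lifting}.
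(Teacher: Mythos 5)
Your proposal is correct and matches the paper's (implicit) argument: the paper treats this as immediate from unfolding the definition of a strict morphism in $\ArrowLax{\Ktwocategory}$, exactly as you do, with strictness of $\varphi$ collapsing to the equation $p\circ s_\Eobject=\id{\Bobject}$. Nothing further is needed.
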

We are now ready to give our general 2-categorical definition
of \emph{comprehension structure with section}
on the morphism $p:\Eobject\to\Bobject$ in the 2-category $\Ktwocategory$.
\begin{definition}\label{definition/comprehension-structure-with-section-in-K}
A comprehension structure with section on $p:\Eobject\to\Bobject$ is a section $\smallstar:\Bobject\to\Eobject$
together with a right adjoint $\comprehensionzero:\Eobject\to \Bobject$ in the 2-category $\Ktwocategory$.
\end{definition}
We then take advantage of Prop.~\ref{proposition/lifting} in order to establish that:
\begin{proposition}\label{proposition/comprehension-structure-with-section-in-K}
A comprehension structure with section 
is a comprehension structure~(\ref{equation/comprehension-structure-in-K})
right adjoint to a section~(\ref{equation/section-in-K}) in the 2-category $\ArrowLax{\Ktwocategory}$.
\end{proposition}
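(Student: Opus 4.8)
The plan is to read off the two morphisms of $\ArrowLax{\Ktwocategory}$ that are involved and then to invoke the lifting criterion of Prop.~\ref{proposition/lifting}. By Prop.~\ref{proposition/section-in-K}, the section $\smallstar:\Bobject\to\Eobject$ corresponds to a strict morphism $S$ of the shape~(\ref{equation/section-in-K}), going from $(\Bobject,\Bobject,\id{\Bobject})$ to $(\Eobject,\Bobject,p)$ with $\Eobject$-component $\smallstar$. By Prop.~\ref{proposition/comprehension-structure-in-K}, a comprehension structure $(\comprehensionzero,\injectionzero)$ corresponds to a morphism $C$ of the shape~(\ref{equation/comprehension-structure-in-K}), going back from $(\Eobject,\Bobject,p)$ to $(\Bobject,\Bobject,\id{\Bobject})$, with $\Eobject$-component $\comprehensionzero$ and carrying the 2-cell $\injectionzero$. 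These two morphisms are composable in the order $S$ then $C$, so the statement to establish is exactly the formal adjunction $S\dashv C$ in $\ArrowLax{\Ktwocategory}$.

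First I would match the data against the hypotheses of Prop.~\ref{proposition/lifting}. The $\Eobject$-components of $S$ and $C$ are $\smallstar$ and $\comprehensionzero$, so the required $\Eobject$-level adjunction $\smallstar\dashv\comprehensionzero$ is precisely the adjunction postulated in Def.~\ref{definition/comprehension-structure-with-section-in-K}. The $\Bobject$-components of both morphisms are $\id{\Bobject}$, so the $\Bobject$-level adjunction is the trivial self-adjunction $\id{\Bobject}\dashv\id{\Bobject}$, whose unit and counit are identities. Finally, the 2-cell carried by the left adjoint $S$ is the identity, since $S$ is the strict morphism encoding the section equation $p\circ\smallstar=\id{\Bobject}$; in particular this 2-cell is invertible, its inverse being again the identity.

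It then remains to check the one nontrivial hypothesis of Prop.~\ref{proposition/lifting}: that the 2-cell $\injectionzero$ carried by the right adjoint $C$ (the 2-cell playing the role of $\psi$) coincides with the mate of the inverse of the identity 2-cell carried by $S$ (the $\varphi$), computed with respect to the two component adjunctions above. Here I would simply unfold the mate formula. Because the $\Bobject$-level adjunction is trivial and the transported 2-cell is an identity, every whiskering by the $\Bobject$-level unit and counit drops out, and the mate collapses to its single surviving factor: the image under $p$ of the counit $\varepsilon:\smallstar\circ\comprehensionzero\Rightarrow\id{\Eobject}$ of $\smallstar\dashv\comprehensionzero$, that is, the counit whiskered on the left by $p$. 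Using the section equation $p\circ\smallstar=\id{\Bobject}$, this 2-cell $p\circ\varepsilon$ has type $\comprehensionzero\Rightarrow p$, and it is by construction the natural transformation $\injectionzero$ associated with a comprehension structure with section, exactly as recalled after Def.~\ref{definition/comprehension-structure-with-section}. Hence the mate of $\varphi^{-1}$ equals $\injectionzero$, the criterion of Prop.~\ref{proposition/lifting} is satisfied, and $S\dashv C$ follows. Conversely, any adjunction in $\ArrowLax{\Ktwocategory}$ between a strict section morphism~(\ref{equation/section-in-K}) and a comprehension morphism~(\ref{equation/comprehension-structure-in-K}) restricts, again by Prop.~\ref{proposition/lifting}, to the $\Eobject$-level adjunction $\smallstar\dashv\comprehensionzero$ with $\smallstar$ a section, which is precisely a comprehension structure with section; this closes the equivalence.

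I expect the only delicate point to be the mate computation, and more precisely the bookkeeping showing that the mate of the identity reduces to $p\circ\varepsilon$ and that this whiskered counit is literally the canonical $\injectionzero$ of Def.~\ref{definition/comprehension-structure-with-section} rather than some variant. Once the trivial $\Bobject$-level adjunction is accounted for, this is a one-line verification using the section equation, so no genuine obstacle remains beyond identifying this mate with the expected comprehension 2-cell.
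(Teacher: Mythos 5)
Your proposal is correct and follows essentially the same route as the paper: both encode the section and the comprehension pair as morphisms of $\ArrowLax{\Ktwocategory}$ via Prop.~\ref{proposition/section-in-K} and Prop.~\ref{proposition/comprehension-structure-in-K}, and both reduce the statement to the lifting criterion of Prop.~\ref{proposition/lifting}, identifying $\injectionzero$ with the mate of the identity 2-cell, which collapses to $p\circ\varepsilon$ by triviality of the $\Bobject$-level adjunction. Your mate computation is, if anything, spelled out in slightly more detail than the paper's.
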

\begin{proof} Suppose given a section $\smallstar:\Bobject\to\Eobject$ of the morphism $p:\Eobject\to\Bobject$
in the 2-category~$\Ktwocategory$, described (Prop.~\ref{proposition/section-in-K})
as a strict morphism
$$
\begin{tikzcd}[column sep=1em]
L \, = \, (\smallstar,\id{\Bobject}, \id{}) \quad : \quad (\Bobject,\Bobject,\id{\Bobject}:\Bobject\to \Bobject)
\arrow[rr] && (\Eobject,\Bobject,p:\Eobject\to\Bobject)
\end{tikzcd}
$$
in the 2-category $\ArrowStrict{\Ktwocategory}$.
Consider moreover a morphism of the form
$$
\begin{tikzcd}[column sep=1em]
R \, = \, (\comprehensionzero,\id{\Bobject}, \iota) \quad : \quad 
(\Eobject,\Bobject,p:\Eobject\to\Bobject) \arrow[rr] && (\Bobject,\Bobject,\id{\Bobject}:\Bobject\to \Bobject)
\end{tikzcd}
$$
in the 2-category $\ArrowLax{\Ktwocategory}$.
The morphisms $L$ and $R$ of the 2-category $\ArrowLax{\Ktwocategory}$
can be depicted as follows in the underlying 2-category $\Ktwocategory$:
$$
\begin{array}{ccc}
\begin{tikzcd}[column sep=3em,row sep=.5em]
{\Bobject}  \ar[dd, "{\id{\Bobject}}"{swap},"{}"{name=target}] \ar[rr,"{\smallstar}"] 
&& 
{\Eobject} \ar[dd,"{p}","{}"{swap,name=source}] 
\\
{} & {} & {} 
\\
{\Bobject}   \ar[rr,"{\id{\Bobject}}"{swap}]  & {} & {\Bobject}
\end{tikzcd}
&
\quad\quad\quad
&
\begin{tikzcd}[column sep=3em,row sep=.5em]
{\Bobject}  \ar[dd, "{\id{\Bobject}}"{swap},"{}"{name=source}] 
&& 
{\Eobject} \ar[dd,"{p}","{}"{swap,name=target}] \ar[ll,"{\comprehensionzero}"{swap}] 
\\
{} & {} & {} 
\\
{\Bobject}  & {} & {\Bobject}  \ar[ll,"{\id{\Bobject}}"] 
\arrow[from=source,to=target,double,-implies,shorten <>=10pt,"{\iota}"{description,pos=.5}]
\end{tikzcd}
\end{array}
$$
Now, suppose that $\smallstar$ and $\comprehensionzero$
define a comprehension structure with section, 
in the sense of Def~\ref{definition/comprehension-structure-with-section-in-K}.
By definition, this means that there is an adjunction $\smallstar\dashv\comprehensionzero$.
By Prop.~\ref{proposition/lifting}, the adjunction $\smallstar\dashv\comprehensionzero$
lifts to an adjunction $L\dashv R$ between the morphisms $L$ and $R$ 
in the 2-category $\ArrowLax{\Ktwocategory}$ precisely when the 2-cell 
$\iota:\comprehensionzero\Rightarrow p$  in the 2-category $\Ktwocategory$ is the mate 
defined as $\iota = p\circ \varepsilon$, of the identity 2-cell $\id{}:\id{\Bobject}\Rightarrow p\circ\smallstar$.
Here, the 2-cell $\varepsilon:\comprehensionzero\circ\smallstar\Rightarrow\id{}$
denotes the counit of the adjunction $\smallstar\dashv\comprehensionzero$
in the 2-category $\Ktwocategory$.
This establishes one direction of the proof, while the other direction is immediate.
\end{proof}

\medbreak
\noindent
Note that the definition of the 2-cell $\iota:\comprehensionzero\Rightarrow p$
as the mate $\iota:\comprehensionzero\Rightarrow p$ of the identity 2-cell $\id{}:\id{\Bobject}\Rightarrow p\circ\smallstar$
provides a conceptual explanation for the definition of $\iota:\comprehensionzero\Rightarrow p$
as the image by $p$ of the counit $\varepsilon$ of the counit of the adjunction $\smallstar\dashv\comprehensionzero$.


\section{Path objects in a 2-category}\label{section/path-objects}
We introduce the notion of \emph{path object} on an object~$\Bobject$ in a 2-category~$\Ktwocategory$.
This construction, which generalizes the usual construction of the arrow category $\Barrowcategory$
of a category $\Bcategory$ when $\Ktwocategory=\Cat$, will play an important role
in~\S\ref{section/application-to-induction-and-coinduction}
when we apply our constructions to inductive and coinductive reasoning on initial algebras
and terminal algebras.
Every object $\Bobject$ in a 2-category $\Ktwocategory$ induces a 2-functor
\begin{equation}\label{equation/Karrowtwofunctor}
\begin{tikzcd}[column sep=1em]
{{\Ktwocategory(-,\Bobject)}^{\to}} \,\, : \,\, \Ktwocategory \arrow[rr] && \Cat
\end{tikzcd}
\end{equation}
which transports every object $X\in\Ktwocategory$ to the arrow category ${\Ktwocategory(\Xobject,\Bobject)}^{\to}$
of the hom-category $\Ktwocategory(\Xobject,\Bobject)$ between $\Xobject$ and $\Bobject$.
\begin{definition}\label{definition/path-object}
A path object for the object $\Bobject$ in the 2-category $\Ktwocategory$
is a pair $(\Bobject^{\to},\beta)$ consisting of an object $\Bobject^{\to}$ and of
a family of isomorphisms
$$
\beta_\Xobject \,\, : \,\, \Ktwocategory(\Xobject,\Bobject^{\to}) \cong \Ktwocategory(\Xobject,\Bobject)^{\to}
$$
2-natural in the object $X$.
Terminology: one says in that case that the pair $(\Bobject^{\to},\beta)$
defines a representation of the 2-functor~(\ref{equation/Karrowtwofunctor}).
\end{definition}
Note that every path object $(\Bobject^{\to},\beta)$ comes equipped with three morphisms and a 2-cell
$$
\begin{array}{ccccc}
\begin{tikzcd}[column sep=.5em]
\arrowcod,\arrowdom: \Bobjectarrow \arrow[rr] && \Bobject
\end{tikzcd}
&&
\begin{tikzcd}[column sep=.5em]
\arrowid: \Bobject \arrow[rr] && \Bobjectarrow
\end{tikzcd}
&&
\begin{tikzcd}[column sep=.5em]
\arrowcell: \arrowdom \arrow[rr,double,-implies] && \arrowcod : \Bobjectarrow \arrow[rr] && \Bobject
\end{tikzcd}
\end{array}
$$
satisfying $\arrowdom\circ\arrowid=\id{\Bobject}=\arrowcod\circ\arrowid$
and that the 2-cell $\arrowcell\circ\arrowid$ coincides with the identity 2-cell on $\id{\Bobject}$.
A remarkable property is that 
\begin{proposition}
The three morphisms $\arrowcod$, $\arrowid$ and $\arrowdom$
are related by a pair of formal adjunctions $\arrowcod \dashv \arrowid$
and $\arrowid \dashv \arrowdom$ in the 2-category $\Ktwocategory$.
\end{proposition}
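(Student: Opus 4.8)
The plan is to reduce the two formal adjunctions in $\Ktwocategory$ to the entirely classical adjunctions between the arrow-category functors in $\Cat$, and then to transport them back along the representation $\beta$ by means of the $2$-dimensional Yoneda lemma. First I would record how the three morphisms arise from the representing data. By $2$-Yoneda, giving a $1$-cell $\Bobjectarrow\to\Bobject$ is the same as giving a $2$-natural transformation $\Ktwocategory(-,\Bobjectarrow)\Rightarrow\Ktwocategory(-,\Bobject)$, and under the isomorphism $\beta$ the source $\Ktwocategory(-,\Bobjectarrow)$ is identified with $\Ktwocategory(-,\Bobject)^{\to}$. The morphisms $\arrowcod$ and $\arrowdom$ correspond in this way to the codomain and domain transformations $\mathrm{cod},\mathrm{dom}\colon\Ktwocategory(X,\Bobject)^{\to}\to\Ktwocategory(X,\Bobject)$, while $\arrowid$ corresponds to the identity-inclusion $\mathrm{id}\colon\Ktwocategory(X,\Bobject)\to\Ktwocategory(X,\Bobject)^{\to}$; concretely, modulo $\beta_X$, postcomposition with $\arrowcod$, $\arrowdom$, $\arrowid$ is precisely $\mathrm{cod}$, $\mathrm{dom}$, $\mathrm{id}$ on the arrow category of $\Ktwocategory(X,\Bobject)$. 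The defining equations $\arrowdom\circ\arrowid=\id{\Bobject}=\arrowcod\circ\arrowid$ and $\arrowcell\circ\arrowid=\id{}$ are then exactly the images of the corresponding identities $\mathrm{dom}\circ\mathrm{id}=\mathrm{Id}=\mathrm{cod}\circ\mathrm{id}$ valid in $\Cat$.

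Next I would invoke the classical fact that for every category $\mathcal{C}$ the arrow-category functors satisfy $\mathrm{cod}\dashv\mathrm{id}\dashv\mathrm{dom}$, and---crucially---that these adjunctions are uniform in $\mathcal{C}$: they are the components of two adjunctions between the $2$-endofunctors $\mathrm{Id}$ and $(-)^{\to}$ on $\Cat$, with units and counits $2$-natural in $\mathcal{C}$. Instantiating at $\mathcal{C}=\Ktwocategory(X,\Bobject)$, which depends $2$-functorially on $X$, yields for every object $X$ a pair of adjunctions $(\arrowcod\circ-)\dashv(\arrowid\circ-)$ and $(\arrowid\circ-)\dashv(\arrowdom\circ-)$ between the postcomposition functors $\Ktwocategory(X,\Bobjectarrow)\rightleftarrows\Ktwocategory(X,\Bobject)$, whose units and counits are $2$-natural in $X$.

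Finally I would transport this pointwise data back to $\Ktwocategory$ using $2$-Yoneda: a family of unit and counit morphisms that is $2$-natural in $X$ is the same as a pair of $2$-cells in $\Ktwocategory$, and the triangle identities, being equalities of $2$-natural transformations between representables, hold in $\Ktwocategory$ precisely because they hold pointwise in each $\Cat$. This is the standard principle that the Yoneda $2$-embedding reflects formal adjunctions, and it delivers the two formal adjunctions $\arrowcod\dashv\arrowid$ and $\arrowid\dashv\arrowdom$ in $\Ktwocategory$.

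The main obstacle is this last step: one must check that the pointwise units and counits genuinely assemble into $2$-natural transformations between representables---so that $2$-Yoneda applies---rather than forming an unrelated family of $2$-cells. This is exactly what the uniformity in $\mathcal{C}$ established in the previous paragraph guarantees, and it is precisely where the $2$-naturality clause in the definition of path object does its work. As a cross-check I would keep in reserve a more elementary and self-contained argument that bypasses the Yoneda machinery: take the counits to be the identities supplied by $\arrowcod\circ\arrowid=\id{\Bobject}=\arrowdom\circ\arrowid$, build the units directly from the generic $2$-cell $\arrowcell\colon\arrowdom\Rightarrow\arrowcod$ (computing them through $\beta_{\Bobjectarrow}$ applied to $\id{\Bobjectarrow}$ and to $\arrowid\circ\arrowcod$), and verify the two triangle equations by hand using $\arrowcell\circ\arrowid=\id{}$.
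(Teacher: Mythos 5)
The paper states this proposition without any written proof, so there is nothing to compare against line by line; your argument---reducing to the classical adjunctions $\mathrm{cod}\dashv\mathrm{id}\dashv\mathrm{dom}$ between a category and its arrow category in $\Cat$, noting their $2$-naturality in the category, and transporting them along the representation $\beta$ via the $2$-Yoneda lemma---is precisely the argument the definition of path object is designed to make available, and it is correct. The only small imprecision is in your fallback sketch: for $\arrowcod\dashv\arrowid$ the \emph{counit} is the identity supplied by $\arrowcod\circ\arrowid=\id{\Bobject}$, whereas for $\arrowid\dashv\arrowdom$ it is the \emph{unit} that is the identity supplied by $\arrowdom\circ\arrowid=\id{\Bobject}$; the two nontrivial $2$-cells $\id{\Bobjectarrow}\Rightarrow\arrowid\circ\arrowcod$ and $\arrowid\circ\arrowdom\Rightarrow\id{\Bobjectarrow}$ are, as you indicate, both extracted from $\arrowcell$ through $\beta_{\Bobjectarrow}$, and the triangle identities then follow from $\arrowcell\circ\arrowid=\id{}$.
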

Going back to the definition (Def.~\ref{definition/comprehension-structure-with-section-in-K})
of a comprehension structure with section, this establishes that 
\begin{proposition}
Every path object $(\Bobject^{\to},\beta)$ defines a comprehension structure with section $\arrowid:\Bobject\to \Bobject^{\to}$
on the morphism $\arrowcod:\Bobject^{\to}\to \Bobject$ in the 2-category $\Ktwocategory$.
\end{proposition}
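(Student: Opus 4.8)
The plan is to read off the required data directly from the structural morphisms carried by a path object, and then apply Definition~\ref{definition/comprehension-structure-with-section-in-K} verbatim, with the morphism $p$ instantiated as $\arrowcod:\Bobjectarrow\to\Bobject$.

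First I would observe that $\arrowid:\Bobject\to\Bobjectarrow$ is a section of the morphism $\arrowcod:\Bobjectarrow\to\Bobject$. Indeed, the structural equations accompanying Definition~\ref{definition/path-object} record that $\arrowcod\circ\arrowid=\id{\Bobject}$, which is precisely the defining equation of a section. By Prop.~\ref{proposition/section-in-K} this section can equivalently be presented as a strict morphism in $\ArrowLax{\Ktwocategory}$, but for the present statement the bare equation $\arrowcod\circ\arrowid=\id{\Bobject}$ already suffices to identify $\arrowid$ as the section $\smallstar$ of Definition~\ref{definition/comprehension-structure-with-section-in-K}.

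Second, I would produce the comprehension morphism required by Definition~\ref{definition/comprehension-structure-with-section-in-K} as the right adjoint of this section. The content here is entirely supplied by the immediately preceding proposition, which establishes the pair of formal adjunctions $\arrowcod\dashv\arrowid$ and $\arrowid\dashv\arrowdom$ in $\Ktwocategory$. The second of these exhibits $\arrowdom:\Bobjectarrow\to\Bobject$ as a right adjoint of the section $\arrowid$. Setting $\comprehensionzero:=\arrowdom$ thus provides exactly a section $\smallstar=\arrowid$ together with a right adjoint $\comprehensionzero=\arrowdom$, which is by definition a comprehension structure with section on $\arrowcod:\Bobjectarrow\to\Bobject$.

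Since both ingredients are already in hand from the preceding two results, there is no genuine obstacle left in this statement: the entire substance of the argument has been discharged in the proof of the adjunction $\arrowid\dashv\arrowdom$. The only point one must keep straight is the variance, namely that the section $\arrowid$ is required to be the \emph{left} adjoint of the comprehension morphism $\comprehensionzero$, which is indeed what $\arrowid\dashv\arrowdom$ delivers, so that one should read off $\comprehensionzero=\arrowdom$ from the second adjunction rather than accidentally invoking the first adjunction $\arrowcod\dashv\arrowid$.
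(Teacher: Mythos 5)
Your proof is correct and follows exactly the route the paper takes (the paper leaves it implicit, deriving the statement directly from the preceding proposition): $\arrowcod\circ\arrowid=\id{\Bobject}$ makes $\arrowid$ a section of $\arrowcod$, and the adjunction $\arrowid\dashv\arrowdom$ supplies the required right adjoint $\comprehensionzero=\arrowdom$, matching the paper's subsequent remark that the induced comprehension structure is the pair $(\arrowdom,\arrowcell)$. Your care about the variance, choosing $\arrowid\dashv\arrowdom$ rather than $\arrowcod\dashv\arrowid$, is exactly the right point to keep straight.
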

Note that the comprehension structure $(\comprehensionzero,\injectionzero)$
constructed in Prop.~\ref{proposition/comprehension-structure-with-section-in-K}
using a 2-categorical mate in $\Ktwocategory$
is provided in that case by the pair $(\arrowdom,\arrowcell)$
with 2-cell $\arrowcell:\arrowdom\Rightarrow\arrowcod$.

\section{Functors with image structure}\label{section/functors-image-structure}
In this section, we introduce the notion of \emph{functor $p:\Ecategory\to\Bcategory$ with image structure}
which weakens (and thus generalizes) the usual notion of Grothendieck opfibration $p:\Ecategory\to\Bcategory$.
\begin{definition}[Image structure]\label{definition/imagestructureonCat}
An image structure on a functor $p:\Ecategory\to\Bcategory$ 
is a pair $(\smallstar,\Pushmap{})$ consisting of a section $\smallstar$
defined as a functor $\smallstar:\Bcategory\to\Ecategory$ 
satisfying the equation $p\circ\smallstar=\id{\Bcategory}$,
together with a family~$\lambda$ of opcartesian morphisms
\begin{equation}\label{equation/image-structure}
\begin{tikzcd}[column sep=.8em]
u \,\, : \,\, A \arrow[rr] && B
\quad \quad \models \quad \quad
{\Pushmap{u}} \,\, : \,\, {\smallstar_A} \arrow[rr] && \push{u}{A}
\end{tikzcd}
\end{equation}
indexed by the morphisms $u:A\to B$ of the basis category $\Bcategory$.
We will also suppose for convenience that the opcartesian morphism
$$
\begin{tikzcd}[column sep=.8em]
\id{A} \,\, : \,\, A \arrow[rr] && A
\quad \quad \models \quad \quad
{\Pushmap{u}} \,\, : \,\, {\smallstar_A} \arrow[rr] && \push{\id{A}}{A}
\end{tikzcd}
$$
coincides with the identity morphism on $\smallstar_A$.
\end{definition}
Here, we follow the fibered philosophy of refinement type systems~\cite{MelliesZeilberger15}, and write 
$$
\begin{tikzcd}[column sep=.8em]
u \,\, : \,\, A \arrow[rr] && B
\quad \quad \models \quad \quad
f \,\, : \,\, R \arrow[rr] && S
\end{tikzcd}
$$
when a morphism $f:R\to S$ in the category $\Ecategory$ has image $p(f)=u:A\to B$ in the category~$\Bcategory$.
The intuition is that the morphism $f:R\to S$ is ``above'' the morphism $u:A\to B$, and ``dependent'' of it.
Accordingly, we write $\Ecategory_{u:A\to B}(R,S)$ for the set of such morphisms $f:R\to S$ such that $p(f)=u$.
Let us recall what universal property is required of the morphism~(\ref{equation/image-structure})
in order to make it opcartesian.
By precomposition in the category~$\Ecategory$, every morphism
$$
\begin{tikzcd}[column sep=.8em]
v \,\, : \,\, B \arrow[rr] && B'
\quad \quad \models \quad \quad
h \,\, : \,\, \push{u}{A} \arrow[rr] && S'
\end{tikzcd}
$$
induces a morphism
$$
\begin{tikzcd}[column sep=.8em]
v\circ u \,\, : \,\, A \arrow[rr] && B'
\quad \quad \models \quad \quad
h\circ \Pushmap{u} \,\, : \,\, {\smallstar_A} \arrow[rr] && S'
\end{tikzcd}
$$
The fact that the morphism~(\ref{equation/image-structure}) is opcartesian
simply means that the operation is reversible, and thus induces a bijection
$$
\Ecategory_{v\circ u:A\to C} (\smallstar_A,S') \quad \cong \quad \Ecategory_{v:B\to C} (\push{u}{A},S')
$$
for every morphism $v:B\to B'$ and every object $S'$ in the fiber of $B'$.
%


\begin{proposition}\label{proposition/from-image-structure-to-push-structure}
For every morphism $u:A\to B$ of the category~$\Bcategory$,
every functor $p:\Ecategory\to\Bcategory$ with an image structure $(\smallstar,\Pushmap{})$
comes equipped with a family of morphisms
\begin{equation}\label{equation/post}
\begin{tikzcd}[column sep=.8em]
v \,\, : \,\, B \arrow[rr] && B'
\quad \quad \models \quad \quad
\postp{v} \,\, : \,\, \push{u}{A} \arrow[rr] &&  \push{v \circ u}{A}
\end{tikzcd}
\end{equation}
indexed by the morphisms $v:B\to B'$ of the category $\Bcategory$,
and a family of morphisms
\begin{equation}\label{equation/pre}
\begin{tikzcd}[column sep=.8em]
\id{B} \,\, : \,\, B \arrow[rr] && B
\quad \quad \models \quad \quad
\prep{w} \,\, : \,\, \push{u\circ w}{A'} \arrow[rr] && \push{u}{A}
\end{tikzcd}
\end{equation}
indexed by the morphisms $w:A'\to A$ of the category $\Bcategory$.
These morphisms make a series of diagrams commute.
First of all, the three coherence diagrams below commute
\begin{equation}\label{equation/two-coherence-diagrams}
\begin{array}{ccc}
	\begin{tikzcd}[column sep=.8em,row sep=.3em]
	&&
	\push{u}{A}
	\ar[dddd,"{\postp{v}}"]
	\\
	\\
	\sectionp{A}\ar[rruu, bend left, "{\Pushmap{u}}"]
	\ar[rrdd,bend right, "{\Pushmap{vu}}",swap]
	\ar[rr,phantom, "{(a)}"{description}]
	& & \,
	\\
	\\
	&&
	\push{v\circ u}{A}
	\end{tikzcd}
&
\begin{tikzcd}[column sep=1.2em,row sep=1.4em]
	\sectionp{A'}
	\ar[dd,"{\sectionp{w}}",swap]
	\ar[rr,"{\Pushmap{u\circ w}}"]
	\ar[rrdd,phantom, "{(b)}"{description}]
	&&
	\push{u\circ w}{A'}
	\ar[dd,"{\prep{w}}"]
	\\
	\\
	\sectionp{A}
	\ar[rr,"{\Pushmap{u}}"{swap}]
	&&
	\push{u}{A}
	\end{tikzcd}
&	
	\begin{tikzcd}[column sep = 2em, row sep = 3em]
\push{u\circ w}{A'} \ar[d,"{\prep{w}}", swap] \ar[r,"{\postp{v}}"] 
\ar[rd,phantom, "{(c)}"{description}]
& \push{v\compose u\compose w}{A'} \ar[d,"{\prep{w}}"] \\
\push{u}{A}\ar[r,"{\postp{v}}"{swap}] & \push{v\circ u}{A}
\end{tikzcd} 
\end{array}
\end{equation}
for every path
$
\begin{tikzcd}[column sep = .5em]
A'\arrow[rr,"{w}"] && A\arrow[rr,"{u}"] && B\arrow[rr,"{v}"] && B'
\end{tikzcd}
$
in the category $\Bcategory$.
Then, the functorial nature of (\ref{equation/post})
and (\ref{equation/pre}) is ensured by the fact that the diagrams below commute
\begin{equation}\label{equation/compositionality-of-left-right-actions}
	\begin{array}{ccc}
	\begin{tikzcd}[column sep = .3em, row sep = 2em]
	& \push{v\compose u}{A} \ar[rd,"{\postp{v'}}"] \\
	\push{u}{A} \ar[rr,"{\postp{(v' \compose v)}}"] \ar[ru,"{\postp{v}}"] & & \push{v' \compose v \compose u}{A}
	\end{tikzcd}
	& \quad &	
	\begin{tikzcd}[column sep = .3em, row sep = 2em]
	& \push{u\compose w}{A'} \ar[rd,"{\prep{w}}"] \\
	\push{u\compose w \compose w'}{A''} \ar[rr,"{\prep{(w \compose w')}}"] \ar[ru,"{\prep{w'}}"]  & & \push{u}{A}
	\end{tikzcd} 
	\end{array}
\end{equation}
for every pair of paths
$
\begin{tikzcd}[column sep = .6em]
A\arrow[rr,"{u}"] && B\arrow[rr,"{v}"] && B'\arrow[rr,"{v'}"] && B''
\end{tikzcd}
$
and
$
\begin{tikzcd}[column sep = .6em]
A''\arrow[rr,"{w'}"] && A'\arrow[rr,"{w}"] && A\arrow[rr,"{u}"] && B
\end{tikzcd}
$
of the category~$\Bcategory$, and moreover,
that the morphisms
\begin{equation}\label{equation/identity-of-left-right-actions}
\begin{array}{c}
\begin{tikzcd}[column sep=.8em]
\id{B} \,\, : \,\, B \arrow[rr] && B
\quad \quad \models \quad \quad
\postp{(id_B)} \,\, : \,\, \push{u}{A} \arrow[rr] && \push{u}{A}
\end{tikzcd}
\\
\begin{tikzcd}[column sep=.8em]
\id{B} \,\, : \,\, B \arrow[rr] && B
\quad \quad \models \quad \quad
\prep{(id_A)} \,\, : \,\, \push{u}{A} \arrow[rr] && \push{u}{A}
\end{tikzcd}
\end{array}
\end{equation}
coincide with the identity, for every morphism $u:A\to B$ of the basis category $\Bcategory$.
\end{proposition}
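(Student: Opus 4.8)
The plan is to define both families of morphisms directly from the opcartesian universal property recalled just before the statement, and then to derive every commuting diagram from the uniqueness clause of that property. The single underlying principle is this: to compare two morphisms out of an object of the form $\push{u}{A}$ that lie above the same map of $\Bcategory$, it suffices to precompose them with the opcartesian morphism $\Pushmap{u}:\sectionp{A}\to\push{u}{A}$ and to check that the resulting morphisms out of $\sectionp{A}$ agree. This one observation will close all of the remaining triangles and squares.

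First I would define the post-composition family (\ref{equation/post}). Given $v:B\to B'$, the morphism $\Pushmap{v\compose u}:\sectionp{A}\to\push{v\compose u}{A}$ lies above $v\compose u:A\to B'$, hence corresponds, under the opcartesian bijection $\Ecategory_{v\compose u}(\sectionp{A},\push{v\compose u}{A})\cong\Ecategory_{v}(\push{u}{A},\push{v\compose u}{A})$ attached to $\Pushmap{u}$, to a unique morphism $\postp{v}:\push{u}{A}\to\push{v\compose u}{A}$ above $v$. By construction this morphism satisfies $\postp{v}\compose\Pushmap{u}=\Pushmap{v\compose u}$, which is precisely the coherence triangle $(a)$. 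Dually, I would define the pre-composition family (\ref{equation/pre}): given $w:A'\to A$, the morphism $\Pushmap{u}\compose\sectionp{w}:\sectionp{A'}\to\push{u}{A}$ lies above $u\compose w:A'\to B$, and so corresponds, under the opcartesian bijection attached to $\Pushmap{u\compose w}$, to a unique morphism $\prep{w}:\push{u\compose w}{A'}\to\push{u}{A}$ above $\id{B}$. By construction $\prep{w}\compose\Pushmap{u\compose w}=\Pushmap{u}\compose\sectionp{w}$, which is exactly square $(b)$.

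With the definitions together with $(a)$ and $(b)$ in hand, each remaining equation is verified by precomposing with the appropriate $\Pushmap{}$ and rewriting. For the commuting square $(c)$, precomposing either path with $\Pushmap{u\compose w}$ and applying $(a)$ and $(b)$ reduces both sides to $\Pushmap{v\compose u}\compose\sectionp{w}$. For functoriality of post-composition, precomposing $\postp{v'}\compose\postp{v}$ and $\postp{(v'\compose v)}$ with $\Pushmap{u}$ and iterating $(a)$ reduces both to $\Pushmap{v'\compose v\compose u}$. For functoriality of pre-composition, precomposing with $\Pushmap{u\compose w\compose w'}$ and applying $(b)$ twice reduces both sides to $\Pushmap{u}\compose\sectionp{w}\compose\sectionp{w'}$, where one uses functoriality of the section $\smallstar$ to identify $\sectionp{w}\compose\sectionp{w'}$ with $\sectionp{w\compose w'}$. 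The identity laws (\ref{equation/identity-of-left-right-actions}) follow the same way: precomposing with $\Pushmap{u}$ and using $(a)$ gives $\postp{(\id{B})}\compose\Pushmap{u}=\Pushmap{u}$, while for $\prep{(\id{A})}$ one uses $(b)$ together with $\sectionp{\id{A}}=\id{\sectionp{A}}$, again from functoriality of $\smallstar$.

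I expect the only delicate point to be the bookkeeping of which base morphism each map lies above, since the opcartesian bijection must be applied to the correct opcartesian morphism and over the correct map of $\Bcategory$; once the fibers are tracked carefully there is no genuine obstacle, as the uniqueness clause of opcartesianness does all the work and no hypotheses beyond the image structure (and the functoriality of $\smallstar$ already assumed) are required.
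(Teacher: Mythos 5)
Your proof is correct and follows essentially the same route as the paper: both define $\postp{v}$ and $\prep{w}$ as the unique morphisms supplied by the opcartesian universal property so that the triangles $(a)$ and $(b)$ commute by construction, and then obtain every remaining coherence equation from the uniqueness clause after precomposition with the relevant $\Pushmap{}$. Your write-up merely makes explicit the verifications that the paper summarizes as following ``easily from the definition.''
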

\begin{proof}
The two morphisms (\ref{equation/post}) and (\ref{equation/pre})
are defined by the universal property of the family of opcartesian morphisms $\Pushmap{}$ 
defining the image structure, as the unique morphisms $\postp{v}$ and $\prep{w}$
making the two diagrams commute in~(\ref{equation/two-coherence-diagrams}-$ab$).
The three coherence properties~(\ref{equation/two-coherence-diagrams}-$c$)
(\ref{equation/compositionality-of-left-right-actions})
and~(\ref{equation/identity-of-left-right-actions}) follow easily from the definition 
of the two morphisms~(\ref{equation/post}) and~(\ref{equation/pre}).
\end{proof}
We deduce from the statement (Prop.~\ref{proposition/from-image-structure-to-push-structure})
just established that 
\begin{corollary}\label{corollary/image-functor}
Every functor $p:\Ecategory\to\Bcategory$ with an image structure comes with a functor
$$
\begin{tikzcd}[column sep =1em]
\imagefunctor \,\, : \,\, \Barrowcategory \arrow[rr] && \Ecategory
\end{tikzcd}
$$
called the \textbf{image functor} associated to the image structure.
\end{corollary}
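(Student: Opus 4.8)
The plan is to construct the image functor $\imagefunctor : \Barrowcategory \to \Ecategory$ by hand on objects and morphisms, using the families of morphisms $\postp{v}$ and $\prep{w}$ supplied by Prop.~\ref{proposition/from-image-structure-to-push-structure}, and then to verify functoriality using precisely the coherence diagrams~(\ref{equation/two-coherence-diagrams}-$c$), (\ref{equation/compositionality-of-left-right-actions}) and~(\ref{equation/identity-of-left-right-actions}) established there. On objects, I set $\imagefunctor(u) := \push{u}{A}$ for each object $u : A \to B$ of the arrow category $\Barrowcategory$; this lives in the fiber $\Ecategory_B$, and by~(\ref{equation/Barrowcategory-image-functor}) it satisfies $p \circ \imagefunctor = \codfunctor$ as desired.

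First I would define the action on morphisms. Recall that a morphism in $\Barrowcategory$ from $u : A \to B$ to $u' : A' \to B'$ is a commuting square, that is, a pair $(w, v)$ of morphisms $w : A \to A'$ and $v : B \to B'$ in $\Bcategory$ satisfying $u' \circ w = v \circ u$. The natural candidate for $\imagefunctor(w,v) : \push{u}{A} \to \push{u'}{A'}$ is the composite that first applies the postcomposition action $\postp{v} : \push{u}{A} \to \push{v \circ u}{A}$ and then uses the commuting-square identity $v \circ u = u' \circ w$ to rewrite the target, before applying the precomposition action $\prep{w} : \push{u' \circ w}{A'} \to \push{u'}{A'}$. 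In the notation of the excerpt this reads $\imagefunctor(w,v) := \prep{w} \circ \postp{v}$, with the codomain of $\postp{v}$ and the domain of $\prep{w}$ identified along the equation $v \circ u = u' \circ w$.

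Next I would check that $\imagefunctor$ preserves identities and composition. Preservation of identities is immediate from~(\ref{equation/identity-of-left-right-actions}): the identity square on $u : A \to B$ is the pair $(\id{A}, \id{B})$, and $\prep{(\id{A})} \circ \postp{(\id{B})}$ is the identity on $\push{u}{A}$. For composition, given composable squares $(w, v) : u \to u'$ and $(w', v') : u' \to u''$, I must show $\imagefunctor(w' \circ w, v' \circ v) = \imagefunctor(w', v') \circ \imagefunctor(w, v)$, that is, $\prep{(w' \circ w)} \circ \postp{(v' \circ v)} = \prep{w'} \circ \postp{v'} \circ \prep{w} \circ \postp{v}$. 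This is established by decomposing both sides using the two triangles in~(\ref{equation/compositionality-of-left-right-actions}), which give functoriality of the post- and pre-actions separately, together with the commutation square~(\ref{equation/two-coherence-diagrams}-$c$), which expresses that the post-action $\postp{v'}$ and the pre-action $\prep{w}$ commute past one another. Pasting these three diagrams along the relevant intermediate objects yields the required equality.

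The main obstacle I expect is purely bookkeeping rather than conceptual: one must keep careful track of which fiber each object $\push{u}{A}$ lives in, and ensure that every identification of domains and codomains along the square equations $u' \circ w = v \circ u$ is consistent, so that the composite $\prep{w} \circ \postp{v}$ is well-typed. Once the coherence diagrams of Prop.~\ref{proposition/from-image-structure-to-push-structure} are in place, there is no further use of the opcartesian universal property, since all the needed commutativities have already been extracted; the work reduces to reindexing and pasting. The commutation square~(\ref{equation/two-coherence-diagrams}-$c$) is the crucial ingredient, as it is exactly what allows the post- and pre-actions coming from two different squares to be interchanged when verifying compositionality.
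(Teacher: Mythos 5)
Your construction is exactly the paper's: the same assignment $u\mapsto\push{u}{A}$ on objects, the same composite $\prep{w}\circ\postp{v}$ (identified across $v\circ u=u'\circ w$) on morphisms, and functoriality verified from the coherence diagrams (\ref{equation/two-coherence-diagrams}-$c$), (\ref{equation/compositionality-of-left-right-actions}) and (\ref{equation/identity-of-left-right-actions}) of Prop.~\ref{proposition/from-image-structure-to-push-structure}. The proof is correct and matches the paper's argument in every essential respect.
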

\begin{proof}
The image functor transports every object $u:A\to B$ of the arrow category $\Barrowcategory$
to the object $\push{u}{A}$ defined by the image structure, 
and every morphism 
$$\begin{tikzcd}[column sep =1em, row sep = 1em]
(v,w) \quad : \quad (A,B,u:A\to B)\arrow[rr] && (A',B',u':A'\to B')
\end{tikzcd}
$$
to the composite morphism below in the category $\Ecategory$
$$
\begin{tikzcd}[column sep=1em]
v \,\, : \,\, B \arrow[rr] && B'
\quad  \models \quad
{\push{u}{A}} \arrow[rr,"{\postp{v}}"] && 
{\push{v\compose u}{A}}
\arrow[rr,"{\id{}}"]
&&
{\push{u'\compose w}{A}} \arrow[rr,"{\prep{w}}"] 
&&
{\push{u'}{A'}}
\end{tikzcd}
$$
The functoriality of $\imagefunctor$ follows from the coherence properties
(\ref{equation/two-coherence-diagrams}-$c$)
(\ref{equation/compositionality-of-left-right-actions})
and (\ref{equation/identity-of-left-right-actions}) 
established in Prop.~\ref{proposition/from-image-structure-to-push-structure}.
\end{proof}
The resulting image functor $\imagefunctor:\Barrowcategory\to\Ecategory$
extends the section $\smallstar:\Bcategory\to\Ecategory$,
in the expected sense that the diagram below commutes:
\begin{equation}\label{equation/main-diagram}
\begin{tikzcd}[column sep=1.1em,row sep=.8em]
&& \Bcategory
\\
\\
&& \Ecategory\arrow[uu,"{p}"{swap}]
\\
\\
\Bcategory\arrow[rrrr,"{\arrowid}"] \arrow[rruu,"{\smallstar}"] \arrow[rruuuu,"{\id{\Bcategory}}",bend left=60] 
&&&& 
\Barrowcategory\arrow[lluu,"{\imagefunctor}"{swap}]\arrow[lluuuu,"{\codfunctor}"{swap},bend right=60]
\end{tikzcd}
\end{equation}
In particular, as explained in the introduction, the diagram~(\ref{equation/Barrowcategory-image-functor}) commutes by definition of the image functor.
%
%
%
Note that every Grothendieck opfibration $p:\Ecategory\to\Bcategory$ with a section $\smallstar:\Bcategory\to\Ecategory$ comes equipped with an image structure, which is canonical when the opfibration is cloven.

\section{Comprehension structures with image}\label{section/comprehension-with-image}
In order to work in full generality, and to include the case of the 2-categories of algebras and coalgebras 
treated in \S\ref{section/application-to-induction-and-coinduction}, we find convenient
to generalize our definition Def.~\ref{definition/imagestructureonCat} of image structure 
for a functor $p:\Ecategory\to\Bcategory$ in the specific case $\Ktwocategory=\Cat$ 
to any morphism $p:\Eobject\to\Bobject$ in a 2-category $\Ktwocategory$.

\begin{definition}[Image structure]\label{definition/image-structure-in-K}
An image structure on a morphism $p:\Eobject\to\Bobject$ in a 2-category $\Ktwocategory$ is a section $\smallstar:\Bobject\to\Eobject$
equipped with a family~$\lambda$ of 2-cells
\begin{equation}\label{equation/image-structure-in-K}
\begin{tikzcd}[column sep=.8em]
{\lambda_u} \,\,\,\, : \,\,\,\, 
{\smallstar_a} \arrow[rr,double,-implies] && \push{u}{a}
\,\,\,\, : \,\,\,\,
\Xobject \arrow[rr] && \Eobject
\end{tikzcd}
\end{equation}
indexed by the objects $\Xobject$, the morphisms $a,b:\Xobject\to \Bobject$ and the 2-cells $u:a\Rightarrow b$ of the 2-category $\Ktwocategory$, where the morphism $\smallstar_a$ is defined as the composite $\smallstar\circ a:\Xobject\to \Eobject$.
One requires moreover that each 2-cell $\lambda_u$
defines an opcartesian morphism
\begin{equation}\label{equation/image-structure-in-K-next}
\begin{tikzcd}[column sep=.8em]
u \,\, : \,\, a \arrow[rr,double,-implies] && b
\quad \models \quad
{\Pushmap{u}} \,\, : \,\, {\smallstar_a} \arrow[rr,double,-implies] && \push{u}{a}
\end{tikzcd}
\end{equation}
with respect to the postcomposition functor 
$$\begin{tikzcd}[column sep=.8em]
\Ktwocategory(\Xobject,p)
\quad : \quad
\Ktwocategory(\Xobject,\Eobject)
\arrow[rr] && 
\Ktwocategory(\Xobject,\Bobject)
\end{tikzcd}
$$
above the morphism $u:a\Rightarrow b$ in the category $\Ktwocategory(\Xobject,\Bobject)$.
%
We also ask for convenience that $\lambda_{u}:{\smallstar_a}\Rightarrow\push{u}{a}$
coincides with the identity 2-cell when $u:a\Rightarrow a$ is the identity 2-cell.
\end{definition}
Note that every morphism $p:\Eobject\to\Bobject$ 
with an image structure $(\smallstar,\lambda)$
to an object $\Bobject$ equipped 
with a path-object $(\Bobject^{\to},\beta)$ in the 2-category $\Ktwocategory$
comes equipped with a morphism $\imagefunctor:\Bobject^{\to}\to \Eobject$ defined as 
$\imagefunctor=\push{\arrowcell}{\arrowdom}$, and thus satisfying the equality:
\begin{equation}\label{equation/image-in-K}
\begin{array}{ccc}
\begin{tikzcd}[column sep =2em, row sep = 1em]
&&
\Bobject
\arrow[dd,"{\smallstar}"]
\\
\\
{\Bobject^{\to}}
\arrow[rruu,yshift=.2em,"{\arrowdom}",bend left=30,""{swap,name=source}] 
\arrow[rrdd,yshift=-.2em,"{\arrowcod}"{swap},bend right=30,""{name=target}] 
&& 
\Eobject
\arrow[dd,"{p}"]
\\
\\
&& 
\Bobject
\arrow[from=source,to=target,double,-implies,shorten <>=2pt,"{\arrowcell}"{description,pos=.5}]
\end{tikzcd}
& \quad\quad = \quad\quad &
\begin{tikzcd}[column sep =2em, row sep = 1em]
&&
\Bobject
\arrow[dd,"{\smallstar}"]
\\
\\
{\Bobject^{\to}}
\arrow[rruu,yshift=.2em,"{\arrowdom}",bend left=30,""{swap,name=uppersource}] 
\arrow[rr,"{\imagefunctor}"{swap},""{name=uppertarget},""{swap,name=downsource}] 
\arrow[rrdd,yshift=-.2em,"{\arrowcod}"{swap},bend right=30,""{name=downtarget}] 
&& 
\Eobject
\arrow[dd,"{p}"]
\\
\\
&& 
\Bobject
\arrow[from=uppersource,to=uppertarget,double,-implies,shorten <>=2pt,"{\lambda_{\arrowcell}}"{pos=.5}]
\end{tikzcd}
\end{array}
\end{equation}
%
%
Moreover, the resulting image morphism makes 
the counterpart of diagram~(\ref{equation/main-diagram})
commute for the same reason as in the specific case
of the 2-category $\Ktwocategory=\Cat$.
For that reason, the morphism $\imagefunctor$ may be seen
as a morphism 
\begin{equation}\label{equation/image-morphism-in-slice}
\begin{tikzcd}[column sep=.5em]
\imagefunctor
\quad : \quad
(\Bobject^{\to},\arrowcod)
\arrow[rr]
&&
(\Eobject,p)
\end{tikzcd}
\end{equation}
in the slice 2-category $\Ktwocategory/\Bobject$,
defined as the expected sub-2-category of $\Ktwocategory/\Ktwocategory$
whose objects are the morphisms $p:\Eobject\to\Bobject$ with codomain $\Bobject$.
%
We are now in the position of defining the notion of \emph{comprehension structure with image} 
at that 2-categorical level of generality.

\begin{definition}\label{definition/comprehension-structure-with-image-in-K}
Suppose given a morphism $p:\Eobject\to\Bobject$ with an image structure
on an object $\Bobject$ equipped with a path-object $(\Bobject^{\to},\beta)$.
A comprehension structure with image on the morphism $p:\Eobject\to\Bobject$
is a right adjoint $\jacobscomprehension:(\Eobject,p)\to (\Bobject^{\to},\arrowcod)$
to the morphism $\imagefunctor:(\Bobject^{\to},\arrowcod)\to(\Eobject,p)$
defined in~(\ref{equation/image-morphism-in-slice})
in the slice 2-category $\Ktwocategory/B$.
\end{definition}
\noindent
A comprehension structure with image on $p : \Eobject \to \Bobject$ 
in the sense of Def.~\ref{definition/comprehension-structure-with-image-in-K}
comes equipped with a pair of adjunctions $\arrowid:\Bobject\leftrightarrows \Bobject^{\to}:\arrowdom$ and $\imagefunctor:\Bobject^{\to}\leftrightarrows \Eobject:\jacobscomprehension$
in the 2-category~$\Ktwocategory$.
From that, one easily deduces that
%
%
\begin{proposition}
Every comprehension structure with image 
(Def.~\ref{definition/comprehension-structure-with-image-in-K})
induces a comprehension structure with section
defined as $\smallstar=\imagefunctor\circ\arrowid:\Bobject\to\Eobject$
(Def.~\ref{definition/comprehension-structure-with-section-in-K}),
where the right adjoint functor $\comprehensionzero:\Eobject\to\Bobject$ is defined 
as the composite $\comprehensionzero=\arrowdom\circ\jacobscomprehension$.
\end{proposition}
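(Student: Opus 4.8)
The plan is to obtain the comprehension structure with section directly by composing two adjunctions that are already at hand, reserving only a short verification that $\smallstar$ is genuinely a section of $p$.

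First I would check the section equation $p\circ\smallstar=\id{\Bobject}$. By the 2-categorical counterpart of diagram~(\ref{equation/main-diagram}), which holds here for the same reason as in the case $\Ktwocategory=\Cat$ and is recorded just after~(\ref{equation/image-in-K}), the image morphism satisfies $p\circ\imagefunctor=\arrowcod$. Combining this with the structural identity $\arrowcod\circ\arrowid=\id{\Bobject}$ of the path object $(\Bobject^{\to},\beta)$ from Def.~\ref{definition/path-object}, I obtain
$$
p\circ\smallstar \;=\; p\circ\imagefunctor\circ\arrowid \;=\; \arrowcod\circ\arrowid \;=\; \id{\Bobject},
$$
so that $\smallstar=\imagefunctor\circ\arrowid$ is indeed a section of $p$, as required by Def.~\ref{definition/comprehension-structure-with-section-in-K}.

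Next I would assemble the right adjoint $\comprehensionzero$ by composing adjunctions in $\Ktwocategory$. The proposition on path objects supplies the formal adjunction $\arrowid\dashv\arrowdom$. On the other side, a comprehension structure with image is by Def.~\ref{definition/comprehension-structure-with-image-in-K} exactly a right adjoint $\jacobscomprehension$ to $\imagefunctor$ in the slice 2-category $\Ktwocategory/\Bobject$; since formal adjunctions are preserved by every 2-functor, pushing this adjunction along the forgetful 2-functor $\Ktwocategory/\Bobject\to\Ktwocategory$ yields $\imagefunctor\dashv\jacobscomprehension$ in $\Ktwocategory$. Composing the two adjunctions in the usual way then gives
$$
\imagefunctor\circ\arrowid \;\dashv\; \arrowdom\circ\jacobscomprehension,
\qquad\text{that is}\qquad
\smallstar\dashv\comprehensionzero
$$
with $\comprehensionzero=\arrowdom\circ\jacobscomprehension$.

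Putting the two steps together produces precisely the data demanded by Def.~\ref{definition/comprehension-structure-with-section-in-K}: a section $\smallstar$ together with a right adjoint $\comprehensionzero:\Eobject\to\Bobject$ in $\Ktwocategory$. I do not expect any genuine obstacle in this argument; the only point that merits a word of care is the descent of the adjunction $\imagefunctor\dashv\jacobscomprehension$ from the slice $\Ktwocategory/\Bobject$ to the ambient 2-category $\Ktwocategory$, and this is immediate from the fact, already exploited in \S\ref{section/formal-adjunctions}, that formal adjunctions are defined by generators and relations and are therefore preserved by 2-functors.
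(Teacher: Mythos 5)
Your argument is exactly the one the paper intends: it records, immediately before the proposition, that a comprehension structure with image yields the pair of adjunctions $\arrowid\dashv\arrowdom$ and $\imagefunctor\dashv\jacobscomprehension$ in $\Ktwocategory$, and then states that the result follows easily, i.e.\ by composing these adjunctions and using $p\circ\imagefunctor=\arrowcod$ together with $\arrowcod\circ\arrowid=\id{\Bobject}$ for the section equation. Your additional remark that the slice adjunction descends along the forgetful 2-functor because formal adjunctions are preserved by 2-functors is a correct and welcome elaboration of a step the paper leaves implicit.
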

%
%
We establish now the converse property which extends to every 2-category $\Ktwocategory$
the property stated in the introduction (Prop. \ref{proposition/comprehension-structure-with-image-is-imagreand-comprehension-structure-with-section}) in the specific case of $\Ktwocategory=\Cat$.
The statement extends~\cite{fumex-phdthesis}, lemma 2.2.10
by relaxing the assumption that $p:\Eobject\to\Bobject$ is a bifibration.
\begin{proposition}\label{proposition/comprehension-structure-with-section-with-image-structure-in-K} 
Suppose that $p:\Eobject\to\Bobject$ has an image structure in the 2-category $\Ktwocategory$
(in the sense of Def.~\ref{definition/image-structure-in-K}).
In that case, every comprehension structure with section 
(in the sense of Def. \ref{definition/comprehension-structure-with-section-in-K})
defines a comprehension structure with image 
(in the sense of Def.~\ref{definition/comprehension-structure-with-image-in-K}).
\end{proposition}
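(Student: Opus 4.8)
The plan is to build the comprehension $\jacobscomprehension$ directly out of the comprehension structure with section, and then to verify the adjunction $\imagefunctor\dashv\jacobscomprehension$ representably, so as to avoid grinding through triangle identities by hand. Write $\smallstar\dashv\comprehensionzero$ for the given comprehension structure with section, with unit $\eta:\id{}\Rightarrow\comprehensionzero\circ\smallstar$ and counit $\varepsilon:\smallstar\circ\comprehensionzero\Rightarrow\id{\Eobject}$, and recall from Prop.~\ref{proposition/comprehension-structure-with-section-in-K} that the associated $2$-cell is $\iota=p\circ\varepsilon:\comprehensionzero\Rightarrow p$. Since $\iota$ is an object of the arrow category $\Ktwocategory(\Eobject,\Bobject)^{\to}$ whose codomain is $p$, I would set $\jacobscomprehension:=\beta_{\Eobject}^{-1}(\iota):\Eobject\to\Bobjectarrow$. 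By construction one has $\arrowdom\circ\jacobscomprehension=\comprehensionzero$, $\arrowcod\circ\jacobscomprehension=p$ and $\arrowcell\cdot\jacobscomprehension=\iota$, so that $\jacobscomprehension$ is a genuine $1$-cell $(\Eobject,p)\to(\Bobjectarrow,\arrowcod)$ of the slice $\Ktwocategory/\Bobject$, and the only thing left is to check that it is right adjoint to $\imagefunctor$ there.

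By the $2$-dimensional Yoneda lemma it suffices to exhibit, for every object $T=(\Xobject,q:\Xobject\to\Bobject)$ of $\Ktwocategory/\Bobject$, an adjunction between the represented hom-categories whose right adjoint is $(\Ktwocategory/\Bobject)(T,\jacobscomprehension)$; since this right adjoint is postcomposition by the fixed $1$-cell $\jacobscomprehension$, its $2$-naturality in $T$ is automatic, and only a natural hom-bijection is at issue. Using the representation $\beta_{\Xobject}$, the hom-category $(\Ktwocategory/\Bobject)\big(T,(\Bobjectarrow,\arrowcod)\big)$ is identified with the slice category $\Ktwocategory(\Xobject,\Bobject)/q$ of arrows into $q$, while $(\Ktwocategory/\Bobject)\big(T,(\Eobject,p)\big)$ is the fibre $\Ktwocategory(\Xobject,\Eobject)_{q}$ of $\Ktwocategory(\Xobject,p)$ over $q$. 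Under these identifications, and using the $2$-naturality of $\lambda$ to reindex the generic opcartesian $2$-cell $\arrowcell$ defining $\imagefunctor=\push{\arrowcell}{\arrowdom}$ along the relevant $h$, postcomposition by $\imagefunctor$ becomes the pushforward $(u:c\Rightarrow q)\mapsto\push{u}{c}$, and postcomposition by $\jacobscomprehension$ becomes $e\mapsto(\iota\cdot e:\comprehensionzero\circ e\Rightarrow q)$.

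It then remains to produce a bijection, natural in $u$ and in $e$, between $2$-cells $\push{u}{c}\Rightarrow e$ lying over $\id{q}$ and slice morphisms $u\to\iota\cdot e$, that is, $2$-cells $s:c\Rightarrow\comprehensionzero\circ e$ with $(\iota\cdot e)\circ s=u$. I would obtain this in two moves. First, opcartesianness of $\lambda_u:\sectionp{c}\Rightarrow\push{u}{c}$ (in the case $w=\id{q}$) gives a bijection between $2$-cells $\push{u}{c}\Rightarrow e$ over $\id{q}$ and $2$-cells $\gamma:\sectionp{c}\Rightarrow e$ with $p\cdot\gamma=u$. Second, postcomposing the adjunction $\smallstar\dashv\comprehensionzero$ into $\Ktwocategory(\Xobject,-)$ yields $\smallstar\circ-\dashv\comprehensionzero\circ-$, hence a bijection $\gamma\mapsto\hat\gamma=(\comprehensionzero\cdot\gamma)\circ(\eta\cdot c)$ between $2$-cells $\sectionp{c}\Rightarrow e$ and $2$-cells $c\Rightarrow\comprehensionzero\circ e$, with inverse $\gamma=(\varepsilon\cdot e)\circ(\smallstar\cdot\hat\gamma)$.

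The crux, and the step I expect to be the main obstacle, is checking that these two bijections are compatible on their side conditions, namely that $p\cdot\gamma=u$ if and only if $(\iota\cdot e)\circ\hat\gamma=u$. This is exactly where the definition $\iota=p\circ\varepsilon$ earns its keep: whiskering $\gamma=(\varepsilon\cdot e)\circ(\smallstar\cdot\hat\gamma)$ by $p$ and using $p\circ\smallstar=\id{\Bobject}$ gives $p\cdot\gamma=\big((p\cdot\varepsilon)\cdot e\big)\circ\hat\gamma=(\iota\cdot e)\circ\hat\gamma$, so that the two side conditions literally coincide. Composing the two bijections then produces the required natural isomorphism of hom-sets, and the representable criterion upgrades it to the adjunction $\imagefunctor\dashv\jacobscomprehension$ in the slice $\Ktwocategory/\Bobject$, which is precisely a comprehension structure with image. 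The remaining care is only the bookkeeping of naturality in $u$, $e$ and $T$, which is inherited from the naturality of the opcartesian bijection, of the adjunction $\smallstar\dashv\comprehensionzero$, and of the representation $\beta$.
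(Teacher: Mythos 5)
Your proof is correct and follows essentially the same route as the paper: define $\jacobscomprehension$ as the morphism representing the $2$-cell $\iota$ via the path object, then establish the adjunction $\imagefunctor\dashv\jacobscomprehension$ representably by composing the opcartesian bijection for $\lambda$ with the transposition of $\smallstar\dashv\comprehensionzero$. The only organisational difference is that the paper packages your side-condition check ($p\cdot\gamma=(\iota\cdot e)\circ\hat\gamma$) into the lifted adjunction in $\Ktwocategory//\Ktwocategory$ from Prop.~\ref{proposition/comprehension-structure-with-section-in-K}, whereas you verify it directly from $\iota=p\circ\varepsilon$ and $p\circ\smallstar=\id{\Bobject}$ --- the content is the same.
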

\noindent
\begin{proof}
Suppose that $p:\Eobject\to\Bobject$ has an image structure 
and at the same time a comprehension structure with section $\smallstar:\Eobject\to\Bobject$
in the 2-category $\Ktwocategory$.
The 2-cell $\iota:\comprehensionzero\to p:\Eobject\to\Bobject$ mentioned in Prop.~\ref{proposition/lifting}
defines a morphism in the 2-category $\Ktwocategory(\Eobject,\Bobject)$,
and thus an object in the 2-category $\Ktwocategory(\Eobject,\Bobject)^{\to}$.
By definition of the path object $(\Bobject^{\to},\beta)$ in Def.~\ref{definition/path-object},
the 2-cell $\iota:\comprehensionzero\to p:\Eobject\to\Bobject$ induces
an object of the 2-category $\Ktwocategory(\Eobject,\Bobject^{\to})$,
and thus a morphism noted $\jacobscomprehension:\Eobject\to\Bobject^{\to}$
and characterized by the equation 
$$
\begin{tikzcd}[column sep=.8em]
\arrowcell\circ \jacobscomprehension
 \,\,\,\, = \,\,\,\, \iota 
 \,\,\,\, : \,\,\,\,  
{\comprehensionzero} \arrow[rr,double,-implies] && p
\,\,\,\, : \,\,\,\,
\Eobject \arrow[rr] && \Bobject
\end{tikzcd}
$$
%
%
We want to show that this morphism $\jacobscomprehension:\Eobject\to\Bobject^{\to}$
is right adjoint to the morphism $\imagefunctor:\Bobject^{\to}\to\Eobject$
in the 2-category~$\Ktwocategory$.
%
To that purpose, we consider an object $\Xobject$ of the 2-category~$\Ktwocategory$
and a pair of morphisms $u:\Xobject\to \Bobject^{\to}$ and $S:\Xobject\to\Eobject$,
and we exhibit a one-to-one relationship (see \cite{Mellies09panorama}, Section 5.11)
between the 2-cells~$\varphi$ and~$\psi$ of the form:
\begin{equation}\label{equation/pair-of-two-cells}
\begin{array}{ccc}
\begin{tikzcd}[column sep=1em, row sep=1em]
& {} &
\Bobjectarrow
\arrow{dd}[]{\imagefunctor} 
 \\
\Xobject \arrow[urr,bend left,"{u}",""{swap,name=source}]
\arrow[drr,bend right,"{S}"{swap},""{name=target}]
\\
& {} & 
\Eobject
\arrow[from=source,to=target,double,-implies,shorten <>=1pt,"{\varphi}"{pos=.5}]
\end{tikzcd}
& \quad \quad \quad  & 
\begin{tikzcd}[column sep=1em, row sep=1em]
& {} &
\Bobjectarrow
 \\
\Xobject \arrow[urr,bend left,"{u}",""{swap,name=source}]
\arrow[drr,bend right,"{S}"{swap},""{name=target}]
\\
& {} & 
\Eobject
\arrow[uu,"{\jacobscomprehension}"{swap}]
\arrow[from=source,to=target,double,-implies,shorten <>=1pt,"{\psi}"{pos=.5}]
\end{tikzcd}
\end{array}
\end{equation}
The key observation is that a 2-cell $\psi$ of that form is the same thing as 
a 2-cell $(\psi_1,\psi_2)$ in the 2-category $\Ktwocategory//\Ktwocategory$
between the composite morphisms:
$$
\begin{array}{ccc}
\begin{tikzcd}[column sep=2em, row sep=1em]
{\Xobject}
\arrow[rr,"{u}"]
\arrow[dd,"{\id{\Xobject}}"{swap}]
&&
\Bobjectarrow
\arrow[rr,"{\arrowdom}"]
\arrow[dd,"{\arrowcod}"{swap},""{name=target}]
&&
{\Bobject}\arrow[dd,"{\id{\Bobject}}",""{name=source,swap}]
\\
\\
{\Xobject}
\arrow[rr,"{\arrowcod\circ u}"{swap}]
&&
{\Bobject}
\arrow[rr,"{\id{\Bobject}}"{swap}]
&&
{\Bobject}
\arrow[from=source,to=target,double,-implies,shorten <>=1pt,"{\arrowcell}"{swap,pos=.5}]
\end{tikzcd}
& 
\begin{tikzcd}[column sep = 2em]
\,\arrow[rr,double,-implies,"{(\psi_1,\psi_2)}"] && \,
\end{tikzcd}
&
\begin{tikzcd}[column sep=2em, row sep=1em]
{\Xobject}
\arrow[rr,"{S}"]
\arrow[dd,"{\id{\Xobject}}"{swap}]
&&
\Eobject
\arrow[rr,"{\comprehensionzero}"]
\arrow[dd,"{p}"{swap},""{name=target}]
&&
{\Bobject}\arrow[dd,"{\id{\Bobject}}",""{name=source,swap}]
\\
\\
{\Xobject}
\arrow[rr,"{p\circ S}"{swap}]
&&
{\Bobject}
\arrow[rr,"{\id{\Bobject}}"{swap}]
&&
{\Bobject}
\arrow[from=source,to=target,double,-implies,shorten <>=1pt,"{\iota}"{swap,pos=.5}]
\end{tikzcd}
\end{array}
$$
It follows from the existence of the adjunction in $\Ktwocategory//\Ktwocategory$
established in Prop.~\ref{proposition/comprehension-structure-with-section-in-K}
that there is a one-to-one relationship between the pairs of 2-cells $(\psi_1,\psi_2)$ 
in the 2-category $\Ktwocategory$ of the form above, and the pairs $(\varphi_1,\varphi_2)$
of 2-cells in the 2-category $\Ktwocategory$
defining a 2-cell $(\varphi_1,\varphi_2)$ in the 2-category $\Ktwocategory//\Ktwocategory$
between the composite morphisms:
$$
\begin{array}{ccc}
\begin{tikzcd}[column sep=2em, row sep=1em]
{\Xobject}
\arrow[rr,"{u}"]
\arrow[dd,"{\id{\Xobject}}"{swap}]
&&
\Bobjectarrow
\arrow[rr,"{\arrowdom}"]
\arrow[dd,"{\arrowcod}"{swap},""{name=target}]
&&
{\Bobject}\arrow[dd,"{\id{\Bobject}}",""{name=source,swap}]
\arrow[rr,"{\smallstar}"]
&&
{\Eobject}\arrow[dd,"{p}"]
\\
\\
{\Xobject}
\arrow[rr,"{\arrowcod\circ u}"{swap}]
&&
{\Bobject}
\arrow[rr,"{\id{\Bobject}}"{swap}]
&&
{\Bobject}
\arrow[rr,"{\id{\Bobject}}"{swap}]
&&
{\Bobject}
\arrow[from=source,to=target,double,-implies,shorten <>=1pt,"{\arrowcell}"{swap,pos=.5}]
\end{tikzcd}
& 
\begin{tikzcd}[column sep = 2em]
\,\arrow[rr,double,-implies,"{(\varphi_1,\varphi_2)}"] && \,
\end{tikzcd}
&
\begin{tikzcd}[column sep=2em, row sep=1em]
{\Xobject}
\arrow[rr,"{S}"]
\arrow[dd,"{\id{\Xobject}}"{swap}]
&&
\Eobject
\arrow[dd,"{p}",""{name=target}]
\\
\\
{\Xobject}
\arrow[rr,"{p\circ S}"{swap}]
&&
{\Bobject}
\end{tikzcd}
\end{array}
$$
The definition of the morphism $\imagefunctor:\Bobjectarrow\to\Ecategory$ and
the cartesianity of the 2-cell $\lambda_{\arrowcell}$ in~(\ref{equation/image-in-K}) 
with respect to the functor $\Ktwocategory(\Xobject,p):\Ktwocategory(\Xobject,\Eobject)\to\Ktwocategory(\Xobject,\Bobject)$
implies that there is a one-to-one relationship between the pairs of 2-cells $(\varphi_1,\varphi_2)$ 
in the 2-category $\Ktwocategory//\Ktwocategory$ above, and the 2-cells $\varphi$ of the form (\ref{equation/pair-of-two-cells})
in the 2-category $\Ktwocategory$.
The end of the proof is easy.
\end{proof}
\section{Illustration: inductive reasoning on functor algebras and dually, coinductive reasoning on functor coalgebras}\label{section/application-to-induction-and-coinduction}\label{section/application}
Suppose given a functor $p:\Ecategory\to\Bcategory$ between two categories $\Bcategory$ and $\Ecategory$
equipped with endofunctors $\Ffunctor:\Bcategory\to\Bcategory$ and $\Gfunctor:\Ecategory\to\Ecategory$ 
and a distributivity law of the form~(\ref{equation/distributivity-law}).
A well-known result by Beck \cite{beck1969distributive} states that the distributivity law 
$\delta: \Ffunctor\circ p\Rightarrow p\circ \Gfunctor$ describes one specific lifting of the functor 
$p:\Ecategory\to\Bcategory$ to a functor $p':\Alg{\Gfunctor}(\Ecategory)\to\Alg{\Ffunctor}(\Bcategory)$ 
between the underlying categories of algebras, in such a way that the diagram below commutes:
$$
\begin{tikzcd}[column sep=2em,row sep=.8em]
{\Alg{\Gfunctor}(\Ecategory)}  \ar[dd, "{U}"{swap}] \ar[rr,"{p'}"] & {} &  {\Alg{\Ffunctor}(\Bcategory)}  \ar[dd,"{U}"] 
\\
\\
{\Ecategory}  \ar[rr,"p"] & {} &  {\Bcategory}
\end{tikzcd}
$$
where $U$ denotes in both cases the forgetful functor.
One main reason for working at a 2-categorical level as we do in the present paper 
is to provide us with a simple and elegant recipe to characterize in just the same spirit 
inherited from Beck \cite{beck1969distributive} when a comprehension structure
with section $\smallstar:\Bcategory\to\Ecategory$ on the functor $p:\Ecategory\to\Bcategory$ 
lifts to a comprehension structure with section $\smallstar:\Alg{\Ffunctor}(\Bcategory)\to\Alg{\Gfunctor}(\Ecategory)$ 
on the functor $p:\Alg{\Gfunctor}(\Ecategory)\to\Alg{\Ffunctor}(\Bcategory)$.
To that purpose, we consider the 2-category $\Endo{\Cat}$ with objects the categories equipped with endofunctors,
and with morphisms the functors equipped with a distributivity law \`a la Beck.
Note that $\Endo{\Cat}$ may be defined as the full sub-2-category of $\ArrowLax{\Cat}$ 
whose objects are of the form $(\Ccategory,\Ccategory,\Gfunctor:\Ccategory\to\Ccategory)$.
%
%
%
This leads us the question of characterizing when a comprehension structure 
with section $\smallstar:\Bcategory\to\Ecategory$ on the functor $p:\Ecategory\to\Bcategory$
lifts to a \emph{comprehension structure with section}
in the 2-category $\Endo{\Cat}$, where this definition should be understood
in the 2-categorical sense of \S\ref{section/comprehension-with-section},
Def.~\ref{definition/comprehension-structure-with-section-in-K}.
We establish that

\begin{proposition}
Suppose given a comprehension structure with section $(\comprehensionzero,\smallstar)$
in $\Cat$ on a functor $p:\Ecategory\to\Bcategory$ between categories $\Bcategory$ and $\Ecategory$
equipped with endofunctors $S:\Bcategory\to\Bcategory$ and $T:\Ecategory\to\Ecategory$.
There is a one-to-one correspondence between the liftings to $\Endo{\Cat}$
of the comprehension structure with section $(\comprehensionzero,\smallstar)$
and the pairs of distributivity laws
$$\begin{array}{ccc}
\begin{tikzcd}[column sep = .8em, row sep = .8em]
\delta \,\, : \,\, \Ffunctor\circ p \arrow[rr,double,-implies] && p\circ \Gfunctor
\,\, : \,\, \Ecategory\arrow[rr] && \Bcategory
\end{tikzcd}
& \quad\quad\quad &
\begin{tikzcd}[column sep = .8em, row sep = .8em]
\sigma \,\, : \,\, \Gfunctor\circ \smallstar \arrow[rr,double,-implies] && \smallstar\circ \Ffunctor
\,\, : \,\, \Bcategory\arrow[rr] && \Ecategory
\end{tikzcd}
\end{array}
$$
such that (1) the composite natural transformation
$$
\begin{tikzcd}[column sep = 1.4em, row sep = .8em]
\Ffunctor \arrow[rr,double,-implies,"{equal}"] && 
{\Ffunctor\circ p\circ\smallstar} \arrow[rr,double,-implies,"{\delta\circ\smallstar}"] &&
{p\circ \Gfunctor\circ\smallstar} \arrow[rr,double,-implies,"{p\circ\sigma}"] &&
{p\circ \smallstar\circ \Ffunctor} \arrow[rr,double,-implies,"{equal}"] && {\Ffunctor}
\end{tikzcd}
$$
is the identity
and (2) the natural transformation $\sigma$ is reversible.
\end{proposition}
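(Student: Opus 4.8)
The plan is to unfold what a lifting to $\Endo{\Cat}$ concretely amounts to, and then to read off the two stated conditions from the two pieces of structure that a comprehension structure with section carries: the section equation and the adjunction. A lifting equips the three functors $p$, $\smallstar$ and $\comprehensionzero$ with distributivity laws — say $\delta:\Ffunctor\circ p\Rightarrow p\circ\Gfunctor$, $\sigma:\Gfunctor\circ\smallstar\Rightarrow\smallstar\circ\Ffunctor$ and some $\gamma:\Ffunctor\circ\comprehensionzero\Rightarrow\comprehensionzero\circ\Gfunctor$ — turning them into morphisms $(p,p,\delta)$, $(\smallstar,\smallstar,\sigma)$ and $(\comprehensionzero,\comprehensionzero,\gamma)$ of $\Endo{\Cat}$, in such a way that $\smallstar$ becomes a section of $p$ and $\comprehensionzero$ becomes right adjoint to $\smallstar$, both in $\Endo{\Cat}$. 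Since $\Endo{\Cat}$ is by definition a \emph{full} sub-$2$-category of $\ArrowLax{\Cat}$, both the section equation and the adjunction may be tested inside $\ArrowLax{\Cat}$, which is exactly where the results of \S\ref{section/formal-adjunctions} apply.

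First I would treat the adjunction. Because the morphisms $\smallstar$ and $\comprehensionzero$ of $\Endo{\Cat}$ have both their source- and target-components equal to the given functors, the two component adjunctions produced by the $2$-functors $\source$ and $\target$ of~(\ref{equation/source-target}) both coincide with the given adjunction $\smallstar\dashv\comprehensionzero$ in $\Cat$. Prop.~\ref{proposition/lifting} then states precisely when this data lifts to an adjunction $\smallstar\dashv\comprehensionzero$ in $\ArrowLax{\Cat}$: the distributivity law $\sigma$ carried by the left adjoint $\smallstar$ must be invertible, and the law $\gamma$ carried by the right adjoint $\comprehensionzero$ must then be the mate of $\sigma^{-1}$. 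This is exactly condition~(2), and it shows moreover that $\gamma$ is not free data but is forced by $\sigma$; hence the only genuine parameters of a lifting are the pair $(\delta,\sigma)$, with $\delta$ entering the adjunction analysis nowhere since $p$ is not one of the two adjoint morphisms.

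Next I would treat the section equation. To say that $\smallstar$ is a section of $p$ in $\Endo{\Cat}$ is to say that the composite morphism $p\circ\smallstar$ equals the identity morphism on the object $(\Bcategory,\Bcategory,\Ffunctor)$. On underlying functors this is the given equation $p\circ\smallstar=\id{\Bcategory}$; on distributivity laws it demands that the law of the composite be the identity $2$-cell $\id{\Ffunctor}$. Computing this composite law by the composition rule of $\ArrowLax{\Cat}$ — the rule producing the expressions $(R_\Bobject\circ\varphi)(\psi\circ L_\Eobject)$ that appear in the definition of formal adjunction — yields the pasting $(p\circ\sigma)(\delta\circ\smallstar)$, read as a $2$-cell $\Ffunctor\Rightarrow\Ffunctor$ through the equalities $\Ffunctor=\Ffunctor\circ p\circ\smallstar$ and $p\circ\smallstar\circ\Ffunctor=\Ffunctor$. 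Requiring it to equal $\id{\Ffunctor}$ is therefore precisely condition~(1). Note that this equation constrains only the restriction $\delta\circ\smallstar$ of $\delta$ along the section, leaving $\delta$ itself a genuinely free parameter subject to~(1).

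Assembling the two analyses gives the claimed bijection: a lifting determines the pair $(\delta,\sigma)$, which satisfies~(2) by the adjunction analysis and~(1) by the section analysis; conversely any pair $(\delta,\sigma)$ satisfying~(1) and~(2) turns $p$ and $\smallstar$ into morphisms of $\Endo{\Cat}$ with $\smallstar$ a section of $p$, and then Prop.~\ref{proposition/lifting} supplies the right adjoint $\comprehensionzero$ carrying the forced law $\gamma$ equal to the mate of $\sigma^{-1}$, reconstructing the comprehension structure with section in $\Endo{\Cat}$ (this is the $\Endo{\Cat}$ instance of Prop.~\ref{proposition/comprehension-structure-with-section-in-K}). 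The two assignments are mutually inverse because $\gamma$ is uniquely determined throughout. The main obstacle I anticipate is purely bookkeeping: pinning down the direction and the whiskering in the composite distributivity law so that it matches the displayed pasting of condition~(1) verbatim, and checking that the mate condition of Prop.~\ref{proposition/lifting} translates into the reversibility of $\sigma$ itself rather than of an adjoint-transpose variant.
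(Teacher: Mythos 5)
Your proposal is correct and follows exactly the route the paper intends: unfold a lifting to $\Endo{\Cat}$ as a triple of distributivity laws on $p$, $\smallstar$ and $\comprehensionzero$, derive condition (2) and the fact that the law on $\comprehensionzero$ is forced to be the mate of $\sigma^{-1}$ from Prop.~\ref{proposition/lifting}, and derive condition (1) by computing the composite law of $p\circ\smallstar$ in $\ArrowLax{\Cat}$ and equating it with $\id{\Ffunctor}$. This matches the paper's (largely implicit) argument, including its remark that $\comprehensionzero$ lifts with the distributivity law $\widetilde{\sigma}$ defined as the mate of $\sigma^{-1}$.
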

It is worth mentioning that, in this situation, the comprehension functor
$\comprehensionzero:\Ecategory\to\Bcategory$ lifts as the pair
$(\comprehensionzero,\widetilde{\sigma}):(\Ecategory,\Gfunctor)\to(\Bcategory,\Ffunctor)$
where the distributivity law $\widetilde{\sigma}:\Ffunctor\circ\comprehensionzero\Rightarrow\comprehensionzero\circ \Gfunctor$
is defined as the mate of the inverse $\sigma^{-1}: \smallstar\circ \Ffunctor\Rightarrow \Gfunctor\circ \smallstar$.
We obtain that in this situation
\begin{corollary}\label{corollary/soundness-end-of-paper}
The comprehension structure with section $\smallstar:\Bcategory\to\Ecategory$
lifts to a comprehension structure with section $(\smallstar,\sigma):(\Bcategory,\Ffunctor)\to(\Ecategory,\Gfunctor)$
which is left adjoint to comprehension $(\comprehensionzero,\widetilde{\sigma}):(\Ecategory,\Gfunctor)\to(\Bcategory,\Ffunctor)$
and thus transports the initial $\Ffunctor$-algebra $\mu\Ffunctor$ to the initial $\Gfunctor$-algebra $\mu\Gfunctor=\smallstar_{\mu\Ffunctor}$.
\end{corollary}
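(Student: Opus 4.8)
The plan is to read the corollary off the preceding proposition for its first half, and then to transport the resulting adjunction along the category-of-algebras construction for its second half.

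First I would record the first assertion, which is essentially the preceding proposition made explicit. Regarding $\Endo{\Cat}$ as the full sub-2-category of $\ArrowLax{\Cat}$ on the objects $(\Ccategory,\Ccategory,\Gfunctor)$, a lifting to $\Endo{\Cat}$ of the comprehension structure with section amounts, by Def.~\ref{definition/comprehension-structure-with-section-in-K} applied in $\Ktwocategory=\Endo{\Cat}$, to a section $(\smallstar,\sigma)$ of $(p,\delta)$ in $\Endo{\Cat}$ together with a right adjoint $(\comprehensionzero,\widetilde{\sigma})$. Here condition~(1) is exactly the statement that the composite distributivity law of $(p,\delta)\circ(\smallstar,\sigma)$ is the identity, that is, that $(\smallstar,\sigma)$ is a genuine section in $\Endo{\Cat}$. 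For the adjunction I would apply Prop.~\ref{proposition/lifting} in $\Ktwocategory=\Cat$: taking $L_\Bobject=L_\Eobject=\smallstar$ and $R_\Bobject=R_\Eobject=\comprehensionzero$ with $\varphi=\sigma$, the original adjunction $\smallstar\dashv\comprehensionzero$ lifts to an adjunction in $\ArrowLax{\Cat}$ precisely when $\sigma$ is invertible (condition~(2)) and $\widetilde{\sigma}$ is the mate of $\sigma^{-1}$. Since $\Endo{\Cat}$ is a \emph{full} sub-2-category, the units, counits and triangle identities of this lifted adjunction all live in $\Endo{\Cat}$, so we obtain the comprehension structure with section $(\smallstar,\sigma)\dashv(\comprehensionzero,\widetilde{\sigma})$ in $\Endo{\Cat}$, which is the first claim.

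Next I would transport this adjunction to the categories of algebras. By Beck's theorem~\cite{beck1969distributive} every morphism-with-distributivity-law in $\Endo{\Cat}$ induces a functor between the associated categories of algebras, and this assignment is 2-functorial, $\mathbf{Alg}:\Endo{\Cat}\to\Cat$, with $(\Ccategory,\Gfunctor)\mapsto\Alg{\Gfunctor}(\Ccategory)$. In particular the section $(\smallstar,\sigma):(\Bcategory,\Ffunctor)\to(\Ecategory,\Gfunctor)$ induces a functor $\smallstar:\Alg{\Ffunctor}(\Bcategory)\to\Alg{\Gfunctor}(\Ecategory)$ sending an $\Ffunctor$-algebra $b:\Ffunctor Y\to Y$ to the $\Gfunctor$-algebra $\smallstar(b)\circ\sigma_Y$ on $\smallstar Y$, while $(\comprehensionzero,\widetilde{\sigma})$ induces $\comprehensionzero:\Alg{\Gfunctor}(\Ecategory)\to\Alg{\Ffunctor}(\Bcategory)$. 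Because any 2-functor preserves formal adjunctions, the adjunction of the previous paragraph is carried to an adjunction $\smallstar\dashv\comprehensionzero$ in $\Cat$ in which $\smallstar:\Alg{\Ffunctor}(\Bcategory)\to\Alg{\Gfunctor}(\Ecategory)$ is the left adjoint.

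Finally, since a left adjoint preserves colimits, and in particular initial objects, and since the initial object of $\Alg{\Ffunctor}(\Bcategory)$ is by definition $\mu\Ffunctor$ while that of $\Alg{\Gfunctor}(\Ecategory)$ is $\mu\Gfunctor$, the functor $\smallstar$ sends $\mu\Ffunctor$ to the initial $\Gfunctor$-algebra, yielding $\mu\Gfunctor=\smallstar_{\mu\Ffunctor}$. I expect the one genuinely delicate point to be the middle step: verifying that $\mathbf{Alg}$ is 2-functorial on all of $\Endo{\Cat}$ and, above all, keeping the variances straight so that the \emph{section} $\smallstar$ really becomes the \emph{left} adjoint on algebras rather than the right one. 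Tracking which distributivity law produces which induced functor, and confirming that the mate $\widetilde{\sigma}$ yields the correct comprehension functor on algebras, is where an orientation error would most easily slip in; the surrounding steps are immediate given the preceding proposition and the standard preservation of initial objects by left adjoints.
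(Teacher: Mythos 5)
Your proposal is correct and follows essentially the route the paper intends: the preceding proposition together with Prop.~\ref{proposition/lifting} yields the adjunction $(\smallstar,\sigma)\dashv(\comprehensionzero,\widetilde{\sigma})$ in $\Endo{\Cat}$, the 2-functorial passage to categories of algebras (\`a la Beck) transports it to an adjunction $\smallstar\dashv\comprehensionzero$ between $\Alg{\Ffunctor}(\Bcategory)$ and $\Alg{\Gfunctor}(\Ecategory)$, and preservation of initial objects by left adjoints gives $\mu\Gfunctor=\smallstar_{\mu\Ffunctor}$. The paper states the corollary without a written-out proof, but your reconstruction, including the care about which distributivity law induces which lifted functor and why $\smallstar$ remains the left adjoint, matches the argument the surrounding text sets up.
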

The result extends the main soundness theorem established by Fumex, Ghani 
and Johann in~\cite{ghani-johann-fumex}, Thm 2.10.
It establishes in their terminology (see \cite{fumex-phdthesis}, Def. 4.3.1)
that $G$ defines an induction scheme for $\mu F$ in $p$.
The approach translates immediately by duality to the case of quotient structures,
and provides in just the same way necessary and sufficient conditions to be in the situation 
of Corollary~\ref{corollary/quotient-structures} and to characterize
the terminal $\Ffunctor$-coalgebra as $\nu\Ffunctor=\smallstar_{\nu\Ffunctor}$.

\section{Conclusion}\label{section/conclusion}
Our main purpose and achievement in this paper is to exhibit the 2-categorical structures
secretly at work in the 1-categorical approach to comprehension structures traditionally
found in categorical logic.
Our work was motivated by the fibered approach to induction on algebras 
and coinduction on coalgebras recently developed by Fumex, Ghani and Johann~\cite{ghani-johann-fumex,fumex-phdthesis}.
We understand our 2-categorical approach and statements (Cor.~\ref{corollary/comprehension-structures}, \ref{corollary/quotient-structures} 
and \ref{corollary/soundness-end-of-paper})
as providing the clean conceptual foundations underlying their soundness theorems.
For lack of space, we did not treat here the proof-theoretical aspects of our 2-categorical description
of comprehension structures.
A natural direction would be to start from the recent multicategorical approach 
to induction~\cite{jafarrahmani-master-thesis} developed in the fibered style
of Melli\`es and Zeilberger's refinement systems~\cite{MelliesZeilberger15}.
We leave that for future work.

\section*{Acknowledgments}
The research underlying this article was partially supported by the ERC Advanced Grant DuaLL, number 670624.
%


\bibliography{comprehension-biblio}

\begin{thebibliography}{10}

\bibitem{beck1969distributive}
Jon Beck.
\newblock Distributive laws.
\newblock In {\em Seminar on triples and categorical homology theory}, pages
  119--140. Springer, 1969.

\bibitem{fumex-phdthesis}
Cl{\'{e}}ment Fumex.
\newblock {\em Induction and coinduction schemes in category theory}.
\newblock PhD thesis, University of Strathclyde, 2012.
\newblock \href {http://dx.doi.org/10.1016/0304-3975(93)90169-T}
  {\path{doi:10.1016/0304-3975(93)90169-T}}.

\bibitem{ghani-johann-fumex}
Cl{\'e}ment Fumex, Neil Ghani, and Patricia Johann.
\newblock Indexed induction and coinduction, fibrationally.
\newblock In {\em Algebra and Coalgebra in Computer Science: 4th International
  Conference, CALCO 2011, Winchester, UK, August 30-September 2, 2011,
  Proceedings}, volume 6859, page 176. Springer Science \& Business Media,
  2011.

\bibitem{ehrhard}
IEEE Computer Society.
\newblock {\em A Categorical Semantics of Constructions}, 1988.
\newblock URL: \url{https://www.irif.fr/~ehrhard/pub/categ-sem-constr.pdf}.

\bibitem{jacobs:book}
B.~Jacobs.
\newblock {\em Categorical Logic and Type Theory}.
\newblock Number 141 in Studies in Logic and the Foundations of Mathematics.
  North Holland, Amsterdam, 1999.

\bibitem{jacobs-paper}
Bart Jacobs.
\newblock Comprehension categories and the semantics of type dependency.
\newblock {\em Theor. Comput. Sci.}, 107(2):169--207, 1993.
\newblock URL: \url{http://dx.doi.org/10.1016/0304-3975(93)90169-T}, \href
  {http://dx.doi.org/10.1016/0304-3975(93)90169-T}
  {\path{doi:10.1016/0304-3975(93)90169-T}}.

\bibitem{jafarrahmani-master-thesis}
Farzad Jafarrahmani.
\newblock {Induction in the Fibred Multicategory}.
\newblock Master's thesis, LSV of ENS Cachan, and Theory Group within the
  School of Computer Science of University of Birmingham, 2020.
\newblock supervised by Gilles Dowek and Noam Zeilberger.

\bibitem{streetReview}
G.~M. Kelly and Ross Street.
\newblock {Review of the elements of 2-categories}.
\newblock In GregoryM Kelly, editor, {\em Category Seminar}, volume 420 of {\em
  Lecture Notes in Mathematics}, pages 75--103. Springer Berlin / Heidelberg,
  1974.
\newblock URL: \url{http://dx.doi.org/10.1007/bfb0063101}, \href
  {http://dx.doi.org/10.1007/bfb0063101} {\path{doi:10.1007/bfb0063101}}.

\bibitem{kelly1974doctrinal}
Max Kelly.
\newblock Doctrinal adjunction.
\newblock In {\em Category Seminar}, pages 257--280. Springer, 1974.

\bibitem{Mellies09panorama}
Paul-Andr{\'{e}} Melli{\`{e}}s.
\newblock {\em Categorical semantics of linear logic}, pages 1 -- 196.
\newblock Number~27 in Panoramas et Synth{\`{e}}ses. Soci{\'{e}}t{\'{e}}
  Math{\'{e}}matique de France, 2009.

\bibitem{MelliesZeilberger15}
Paul-Andr{\'e} Melli{\`e}s and Noam Zeilberger.
\newblock Functors are type refinement systems.
\newblock In {\em Proceedings of the 42nd Annual ACM SIGPLAN-SIGACT Symposium
  on Principles of Programming Languages, POPL 2015, Mumbai, India, January
  15-17, 2015.}, 2015.

\bibitem{moss2018dialectica}
Sean Moss.
\newblock {\em The Dialectica Models of Type Theory}.
\newblock PhD thesis, University of Cambridge, 2018.

\bibitem{street:formal-monads}
Ross Street.
\newblock The formal theory of monads.
\newblock {\em Journal of Pure and Applied Algebra}, 2(2):149--168, 1972.

\bibitem{lawvere-hyperdoctrines}
Lawvere William.
\newblock Equality in hyperdoctrines and comprehension schema as an adjoint
  functor.
\newblock In {\em Proceedings of the AMS Symposium on Pure Mathematics XVII},
  pages 1--14. AMS, 1970.

\bibitem{zermelo}
Ernst Zermelo.
\newblock Untersuchungen {\"u}ber die grundlagen der mengenlehre. i.
\newblock {\em Mathematische Annalen}, 65(2):261--281, 1908.

\end{thebibliography}

\appendix


\section{Four alternative notions of comprehension structures}
For the sake of completeness, we give the list below 
of four well-recognized notions of comprehension structures
appearing in the literature.

\paragraph*{Jacobs comprehension categories.} 
The notion of comprehension category was introduced by Jacobs 
(\cite{jacobs-paper}, Def 4.1, p. 181, and \cite{jacobs:book}, chapter 10.4, page 613).
A comprehension category is defined there as a functor 
$\jacobscomprehension:\Ecategory\to\Bcategory^{\to}$ satisfying that
\begin{enumerate}
\item the functor $p:\Ecategory\to\Bcategory$ defined as the composite functor $p=\codfunctor\circ \jacobscomprehension$ is a Grothendieck fibration,
\item the functor $\jacobscomprehension:\Ecategory\to\Bcategory^{\to}$ 
is cartesian in the sense that it transports every $p$-cartesian morphism
of $\Ecategory$ to a $\codfunctor$-cartesian morphism of $\Bcategory^{\to}$,
which may be equivalently defined as a \textit{pullback square} in the category $\Bcategory$.
\end{enumerate}
A comprehension category is thus the same thing as a comprehension structure 
in the sense of Def.~\ref{definition/comprehension-structure}
where the underlying functor $p:\Ecategory\to\Bcategory$ is a Grothendieck fibration,
and where the functor $\jacobscomprehension:\Ecategory\to\Bcategory^{\to}$
induced from the functor $\comprehensionzero:\Ecategory\to\Bcategory$
and from the natural transformation $\injectionzero:\comprehensionzero\Rightarrow p$
transports every $p$-cartesian morphism of $\Ecategory$
to a pullback square in the category $\Bcategory$.
Note that in that case, the equality $p=\codfunctor\circ \jacobscomprehension$ holds by construction.

%

\paragraph*{Ehrhard $D$-categories.} 
The notion of $D$-category was introduced by Ehrhard \cite{ehrhard}.
Ehrhard's $D$-categories are also called \textit{comprehension category with units}
by Jacobs \cite{jacobs-paper} def. 4.12, and \emph{Ehrhard comprehension category}
by Moss \cite{moss2018dialectica}, p 22.
A pre-$D$-category is defined in \cite{ehrhard} (Section 2.1, def. 5) 
as a functor $p:\Ecategory\to \Bcategory$ equipped with a right adjoint
functor $\smallstar:\Bcategory\to \Ecategory$ such that the counit of the adjunction
$p\dashv \smallstar$
an isomorphism, or equivalently, that the functor $\smallstar$ is fully faithful.
The functor $\smallstar:\Bcategory\to \Ecategory$ may be thus seen as a section
of the functor $p:\Ecategory\to \Bcategory$ up to isomorphism.
In the definition of a pre-$D$-category, the functor $\smallstar:\Bcategory\to \Ecategory$
should also come equipped with a right adjoint functor $\comprehensionzero: \Ecategory\to\Bcategory$.
Finally, a $D$-category is defined in \cite{ehrhard} (Section 2.1, def. 5)
as a pre-$D$-category where the functor $p:\Ecategory\to \Bcategory$
is a Grothendieck fibration.

In his later reformulation \cite{jacobs-paper} of the notion of $D$-category, 
Jacobs makes the extra assumption that the counit of the adjunction $p\dashv \smallstar$ 
is the identity, and not just an isomorphism.
This implies in particular that the functor $\smallstar:\Bcategory\to\Ecategory$ 
is a section of the functor $p:\Ecategory\to\Bcategory$.
A $D$-category is thus defined in \cite{jacobs-paper} Def. 4.12 as a Grothendieck fibration 
$p:\Ecategory\to \Bcategory$ equipped with a terminal object functor 
$\smallstar:\Bcategory\to \Ecategory$
which has a right adjoint noted $\comprehensionzero: \Ecategory\to\Bcategory$.
Here, by terminal object functor $s:\Bcategory\to \Ecategory$,
one means a section of the functor $p:\Ecategory\to\Bcategory$
which transports every object $A$ of the basis category $\Bcategory$
to a terminal object of the fiber $\Ecategory_{A}$ of the object~$A$
with respect to the functor $p:\Ecategory\to\Bcategory$.
Note in particular that the terminal object functor $\smallstar:\Bcategory\to \Ecategory$
is fully faithful and right adjoint to the functor $p:\Ecategory\to\Bcategory$.

A $D$-category in that sense is thus the same thing as a comprehension structure
with section (Def. \ref{definition/comprehension-structure-with-section})
where the functor $p:\Ecategory\to\Bcategory$ is a Grothendieck fibration,
and where the section $\smallstar:\Bcategory\to \Ecategory$ is moreover right adjoint 
to $p:\Ecategory\to\Bcategory$.
As mentioned above, this last point means that the section $\smallstar:\Bcategory\to \Ecategory$
is the terminal object function which associates to every object $A$ of the category $\Bcategory$
the terminal object in its fiber $\Ecategory_{A}$ with respect to the functor $p:\Ecategory\to\Bcategory$.

\paragraph*{Fumex $tC$-opfibrations.}
The notion of $tC$-opfibration was introduced by Fumex in his PhD thesis, (\cite{fumex-phdthesis} p. 38, def. 2.2.2.)
A $tC$-category is defined there as a Grothendieck opfibration $p:\Ecategory\to\Bcategory$
with a fully faithful section $\sectionp{}:\Bcategory\to\Ecategory$.
The section $\sectionp{}:\Bcategory\to\Ecategory$ is moreover required to have a right adjoint
noted $\comprehensionzero: \Ecategory\to\Bcategory$.
Note that, given an object $A$ of the basis category $\Bcategory$,
one does not require that the object $s_A$ is terminal in the fiber $\Ecategory_A$ of the object~$A$.
This is one main difference with Ehrhard's notion of $D$-category.

A $tC$-category is thus the same thing as a comprehension structure with section
in the sense of Def.~\ref{definition/comprehension-structure-with-section}
where the functor $p:\Ecategory\to\Bcategory$ is a Grothendieck opfibration
and where the section $\sectionp{}:\Bcategory\to\Ecategory$ is moreover fully faithful.
At this stage, it is important to observe that every Grothendieck opfibration has an image structure
in the sense of Def.~\ref{definition/imagestructureonCat}, or equivalently, 
in the sense of Def.~\ref{definition/image-structure-in-K} for the specific case $\Ktwocategory=\Cat$.
From this follows, by Prop.~\ref{proposition/comprehension-structure-with-section-with-image-structure-in-K},
that a $tC$-category is in fact the same thing as a comprehension structure 
with image in sense of Def.~\ref{definition/comprehension-structure-with-image},
where the functor $p:\Ecategory\to\Bcategory$ is moreover a Grothendieck opfibration
and where the section $\sectionp{}:\Bcategory\to\Ecategory$ is fully faithful.



\paragraph*{Lawvere categories.} 
The notion of Lawvere category was introduced by Jacobs in \cite{jacobs-paper}, p 190,
as a way to reflect the work by Lawvere \cite{lawvere-hyperdoctrines} on hyperdoctrines in categorical logic.
A Lawvere category is defined as a Grothendieck bifibration $p:\Ecategory\to\Bcategory$
with a terminal object in each fiber, defining a functor $\sectionp{}:\Bcategory\to\Ecategory$, 
and such that the (ordinary) functor
$$
f \mapsto \Sigma_f \sectionp{(dom f)} : \Bcategory^{\to}\to \Ecategory
$$
induced by the left fibration structure has a right adjoint $\comprehensionzero:\Ecategory\to\Bcategory^{\to}$, 
verifying $\codfunctor\circ \comprehensionzero=p$,
and such that the unit and counit are vertical (their image by $\codfunctor$ and $p$ is the identity).
Note that every Lawvere category is a $tC$-opfibration in the sense of Fumex \cite{fumex-phdthesis}.
A Lawvere category is thus the same thing as a comprehension structure 
with image in sense of Def.~\ref{definition/comprehension-structure-with-image}
where the functor $p:\Ecategory\to\Bcategory$ is a Grothendieck bifibration
and where the section $\sectionp{}:\Bcategory\to\Ecategory$ is the terminal object functor.
\end{document}